\documentclass{amsart}

\oddsidemargin 6pt \evensidemargin 6pt \marginparwidth 48pt
\marginparsep 10pt
\topmargin -18pt \headheight 12pt \headsep 25pt  \footskip 30pt
\textheight 625pt \textwidth 431pt \columnsep 10pt \columnseprule 0pt

\usepackage[all]{xy}

\theoremstyle{plain}
\newtheorem{thm}{Theorem}[section]
\newtheorem{prop}[thm]{Proposition}
\newtheorem{lemma}[thm]{Lemma}

\theoremstyle{definition}
\newtheorem{dfn}[thm]{Definition}

\theoremstyle{remark}
\newtheorem{rem}[thm]{Remark}

\newcommand{\HH}{\mathrm{H}}



\begin{document}

\title{Universal deformation rings for the symmetric group $S_4$}

\author{Frauke M. Bleher}
\address{F.B.: Department of Mathematics\\University of Iowa\\
Iowa City, IA 52242-1419, U.S.A.}
\email{fbleher@math.uiowa.edu}
\thanks{The first author was supported in part by  
NSF Grants DMS01-39737 and DMS06-51332  and NSA Grant
H98230-06-1-0021.}
\author{Giovanna LLosent}
\address{G.L.: Department of Mathematics\\CSU
San Bernardino, CA 92407-2397, U.S.A.}
\email{gllosent@csusb.edu}
\subjclass{Primary 20C20; Secondary 16G20}
\keywords{Universal deformation rings, stable endomorphism rings, 
special biserial algebras}

\begin{abstract}
Let $k$ be an algebraically closed field of characteristic $2$, and let 
$W$ be the ring of infinite Witt vectors over $k$. Let $S_4$ denote the symmetric group on $4$ letters. 
We determine the universal deformation ring $R(S_4,V)$ for every $kS_4$-module $V$ which 
has stable endomorphism ring $k$ and show that $R(S_4,V)$ is isomorphic to either $k$, or
$W[t]/(t^2,2t)$, or the group ring  $W[\mathbb{Z}/2]$. 
This gives a positive answer in this case to a question
raised by the first author and Chinburg whether the universal deformation 
ring of a representation of a  finite group with stable endomorphism ring $k$ 
is always isomorphic to a subquotient ring of the group ring over $W$ of a defect group of the
modular block associated to the representation.
\end{abstract}

\maketitle


\section{Introduction}
\label{s:intro}
\setcounter{equation}{0}
\setcounter{figure}{0}

Let $k$ be an algebraically closed field of characteristic $p>0$ and let $W=W(k)$ be the ring of infinite 
Witt vectors over $k$. Let $G$ be a finite group, and suppose $V$ is a finitely generated $kG$-module.
If the stable endomorphism ring $\underline{\mathrm{End}}_{kG}(V)$ is one-dimensional over $k$,
it was shown in \cite{bc} that $V$ has a universal deformation ring $R(G,V)$. The ring $R(G,V)$ is universal 
with respect to deformations of $V$ over complete local commutative Noetherian rings with residue field $k$
(see \S\ref{s:prelim}). 
In \cite{bc,bl,3sim,diloc}, the isomorphism types of $R(G,V)$ have been determined for $V$ belonging
to cyclic blocks, respectively to various tame blocks with dihedral defect groups
with one or three isomorphism classes of simple modules. 
In the present paper, we will consider the case when $V$ belongs to a particular tame block with two
isomorphism classes of simple modules. 
The key tools used to determine 
the universal deformation rings in all these cases
have been results from modular and ordinary representation theory due to 
Brauer, Erdmann \cite{erd}, 
Linckelmann \cite{linckel,linckel1}, 
Carlson-Th\'{e}venaz \cite{carl2}, 
and others.

The main motivation for studying universal deformation rings for finite groups is that this case helps
understand ring theoretic properties of universal deformation rings for profinite groups $\Gamma$.
The latter have become an important tool in number theory, in particular if $\Gamma$ is a
profinite Galois group
(see e.g. \cite{cornell}, \cite{wiles,taywiles}, \cite{breuil} 
and their references).
In \cite{lendesmit}, de Smit and Lenstra showed
that if $\Gamma$ is an arbitrary profinite group and $V$ is a finite
dimensional vector space over $k$ with a continuous $\Gamma$-action which has a universal
deformation ring  $R(\Gamma,V)$, then $R(\Gamma,V)$ is the inverse limit of the universal 
deformation rings $R(G,V)$ when $G$ runs over all finite discrete quotients of $\Gamma$ through 
which the $\Gamma$-action on $V$ factors. Thus to answer questions about the ring structure of 
$R(\Gamma,V)$, it is natural to first consider the case when $\Gamma=G$ is finite.

Suppose now that $k$ has characteristic $2$ and that $S_4$ denotes the symmetric group on $4$ 
letters. 
In the present paper, we consider the group ring $kS_4$ 
which
is its own (principal) block and 
has two isomorphism classes of simple modules. These are represented by the trivial
simple module $T_0$ and a $2$-dimensional simple module $T_1$ which is inflated from 
the symmetric group $S_3$.
Since the Sylow $2$-subgroups of $S_4$ are dihedral groups of order $8$, $kS_4$ is of
tame representation type.
It was shown in \cite{bc4.9,bc5} that $R(S_4,T_1)\cong W[t]/(t^2,2t)$ which
provides an example of a universal deformation ring which is not a complete intersection. 
The results in \cite{bc4.9,bc5} additionally imply that for every finitely generated $kS_4$-module $V$ 
with stable endomorphism ring $k$, the universal deformation ring $R(S_4,V)$ arises
from arithmetic in the following sense. By \cite[Lemma 3.3 and Thm. 3.7]{bc5}, there are infinitely many real quadratic fields
$L$ such that the Galois group $G_{L,\emptyset}$ of the maximal totally unramified extension of $L$ 
surjects onto $S_4$ and $R(G_{L,\emptyset},V)\cong R(S_4,V)$, where $V$ is viewed as a module for $G_{L,\emptyset}$
via inflation. In particular, our main result, Theorem \ref{thm:supermain},
determines all the universal deformation rings $R(G_{L,\emptyset},V)$.

Our main result is as follows, were $\Omega$ 
denotes the syzygy, or Heller, operator (see for example \cite[\S 20]{alp}).

\begin{thm}
\label{thm:supermain}
Let $V$ be a finitely generated indecomposable $kS_4$-module with stable endomorphism ring $k$, 
and let $\mathfrak{C}$ be the component of the stable Auslander-Reiten quiver of $kS_4$ 
containing $V$. 
\begin{enumerate}
\item[i.] Suppose $\mathfrak{C}$ or $\Omega(\mathfrak{C})$ contains $T_0$. 
Then $\mathfrak{C}$ is of type $\mathbb{Z}A_\infty^\infty$, and all modules in $\mathfrak{C}\cup
\Omega(\mathfrak{C})$ have stable endomorphism ring equal to $k$. In this case,
$R(S_4,V)$ is isomorphic to  $W [\mathbb{Z}/2]$.
\item[ii.] Suppose $\mathfrak{C}$ contains $T_1$. Then $\mathfrak{C}$ is of type 
$\mathbb{Z}A_\infty^\infty$, and $\mathfrak{C}=\Omega(\mathfrak{C})$. In this case,
$V$ is isomorphic to $\Omega^j(T_1)$ for some integer $j$, and 
$R(S_4,V)$ is isomorphic to $W[t]/(t^2,2t)$.
\item[iii.] Suppose $\mathfrak{C}$ contains a uniserial module $Y$ of length $3$ with non-isomorphic
top and socle. Then $\mathfrak{C}$ is a $3$-tube with $Y$ belonging to its boundary, and 
$\mathfrak{C}=\Omega(\mathfrak{C})$. In this case,
$V$ is isomorphic to $Y$, $\Omega^2(Y)$ or $\Omega^4(Y)$, and $R(S_4,V)$ is isomorphic to $k$.
\end{enumerate}
The only components of the stable Auslander-Reiten quiver of $kS_4$ containing modules with 
stable endomorphism ring $k$ are the ones in $(i) - (iii)$.
\end{thm}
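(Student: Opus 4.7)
My plan is to proceed in three stages: classify the indecomposable modules with stable endomorphism ring $k$, reduce each case to a convenient representative via the syzygy operator, and finally compute the universal deformation ring in each case. For the classification, I would use that the Sylow $2$-subgroups of $S_4$ are dihedral of order $8$, so $kS_4$ is a tame block of dihedral type and, by the work of Erdmann \cite{erd}, is Morita equivalent to a special biserial algebra. Every indecomposable $kS_4$-module is therefore a string or band module, and the components of the stable Auslander--Reiten quiver are either of type $\mathbb{Z}A_\infty^\infty$ (strings) or tubes (bands and periodic strings). Since the stable endomorphism ring of a string module admits a combinatorial description in terms of factor and substring morphisms, a finite check on the relevant strings isolates exactly the indecomposables $V$ with $\underline{\mathrm{End}}_{kS_4}(V)=k$ and sorts them into the three families (i)--(iii), which also proves the final claim of the theorem. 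I should emphasize a subtle point here: in cases (i) and (iii), \emph{every} indecomposable module in $\mathfrak{C}\cup\Omega(\mathfrak{C})$ has stable endomorphism ring $k$, whereas in case (ii) only the Heller shifts $\Omega^j(T_1)$ inside the component do.

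Using the fundamental invariance $R(S_4,V)\cong R(S_4,\Omega V)$, I would reduce each case to a single representative. Case (ii) is then immediate from \cite{bc4.9, bc5}, which established $R(S_4,T_1)\cong W[t]/(t^2,2t)$. For case (iii), I would analyze the uniserial module $Y$ of length $3$ on the boundary of the $3$-tube directly: its projective cover and almost split sequence are explicit, and I expect to show that $\mathrm{Ext}^1_{kS_4}(Y,Y)=0$, which already forces $R(S_4,Y)\cong k$. For case (i), I would exploit the quotient $S_4\twoheadrightarrow S_4/A_4\cong\mathbb{Z}/2$: inflation produces a deformation of $T_0$ over $W[\mathbb{Z}/2]$, coming from the well-known universal deformation of the trivial $k\mathbb{Z}/2$-module. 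I would then prove that this inflated deformation is already universal by computing the tangent space $\mathrm{Ext}^1_{kS_4}(T_0,T_0)\cong H^1(S_4,k)\cong k$ and by carrying out a careful obstruction calculation to show that the versal hull has precisely the relation $s^2=1$ (equivalently $t^2+2t=0$ after the substitution $t=s-1$), thus ruling out any smaller quotient.

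The main obstacle, I anticipate, is case (i): the components $\mathfrak{C}$ and $\Omega(\mathfrak{C})$ each contain infinitely many indecomposable modules, and one must conclude $R(S_4,V)\cong W[\mathbb{Z}/2]$ for every one of them, not merely for $T_0$ and its syzygies. The natural strategy is to propagate the result along the almost split sequences, using $\Omega$-invariance of the deformation ring together with the control that AR theory gives over $\mathrm{Ext}^1$ and $\mathrm{Ext}^2$ of modules in a $\mathbb{Z}A_\infty^\infty$ component; along the way this propagation must also certify that every module in the component actually has stable endomorphism ring $k$. The delicate step is ensuring that the ring structure of $W[\mathbb{Z}/2]$ is preserved at each propagation step, so that no module further into the component acquires either extra deformations (yielding a larger ring) or extra obstructions (collapsing to a proper quotient of $W[\mathbb{Z}/2]$).
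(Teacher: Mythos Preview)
Your plan has the right architecture, and case (ii) matches the paper. But there are two genuine gaps and one substantive divergence.

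First, a misreading: you assert that in case (iii) ``\emph{every} indecomposable module in $\mathfrak{C}\cup\Omega(\mathfrak{C})$ has stable endomorphism ring $k$.'' This is false. The $3$-tube has infinitely many modules, and only the three on its boundary have stable endomorphism ring $k$; that is exactly why the theorem forces $V\cong Y,\ \Omega^2(Y)$ or $\Omega^4(Y)$. The paper exhibits an explicit non-scalar stable endomorphism of every module off the boundary.

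Second, still in case (iii), the implication ``$\mathrm{Ext}^1_{kS_4}(Y,Y)=0$ already forces $R(S_4,Y)\cong k$'' is invalid. A vanishing tangent space only yields that $R(S_4,Y)$ is a quotient of $W$; you must still exclude lifts over $W/2^nW$ for $n\ge 2$. The paper does not argue this way: it passes to the $\Omega$-translate $T_{11}$ (radical series $T_1,T_1$), identifies $T_{11}\cong\mathrm{Ind}_{A_4}^{S_4}E_{12}$ for a uniserial $kA_4$-module $E_{12}$, checks the $\mathrm{Ext}^1$-hypothesis of Proposition~\ref{prop:induceddef}, and then quotes the previously known $R(A_4,E_{12})\cong k$.

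Third, for case (i) your proposed ``propagation along almost split sequences'' is too vague, and your inflation idea from $S_4/A_4$ does not extend beyond $T_0$: the modules $V_n$ for $n\ge 1$ have $T_1$ as a composition factor and are not inflated from $\mathbb{Z}/2$. The paper treats each $V_n$ directly, and the method is quite different from yours:
\begin{itemize}
\item It proves the $V_n$ are \emph{endo-trivial}, using the Auslander--Carlson description of almost split sequences tensored with odd-dimensional modules; this simultaneously gives $\underline{\mathrm{End}}_{kS_4}(V_n)=k$ and $\mathrm{Ext}^1_{kS_4}(V_n,V_n)\cong k$.
\item For the upper bound on $R(S_4,V_n)$, it \emph{restricts} to the cyclic subgroup $C=\langle h\rangle$ generated by a transposition (a subgroup, not the quotient $S_4/A_4$), computes $\mathrm{Res}^{S_4}_C V_n\cong k\oplus(kC)^{2n}$, and obtains a surjection $W[\mathbb{Z}/2]\twoheadrightarrow R(S_4,V_n)$ by checking it on tangent spaces.
\item For the lower bound, it constructs two non-isomorphic lifts of each $V_n$ over $W$ by induction on $n$, using Lemma~\ref{lem:Wlift} together with explicit ordinary-character bookkeeping against the decomposition matrix; two distinct $W$-points then force $R(S_4,V_n)\cong W[\mathbb{Z}/2]$.
\end{itemize}
If you want to salvage a propagation approach, you would need, at each step along the component, a concrete mechanism producing both a surjection from $W[\mathbb{Z}/2]$ and enough distinct lifts; AR theory alone does not supply either.
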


Note that in parts (i) and (iii), $R(S_4,V)$ is a complete intersection, whereas in part (ii) this is 
not the case. In all three parts, 
$R(S_4,V)$ is isomorphic to a subquotient ring of $WD_8$ when $D_8$ is a dihedral group of order 
$8$.
In particular, this gives a positive answer in case of the unique $2$-modular block of $S_4$ to a 
question raised by the first author and Chinburg in \cite[Question 1.1]{bc} whether the universal 
deformation ring of a representation of a finite group with stable endomorphism ring $k$ is
always isomorphic to a subquotient ring of the group ring over $W$ of a defect group of the 
modular block  associated to the representation.

The paper is organized as follows. In \S \ref{s:prelim}, we give some background on universal deformation rings. In \S \ref{s:ks4}, we state the properties of $kS_4$ we need to prove
Theorem \ref{thm:supermain}. In particular, we provide the necessary results concerning the
indecomposable modules and the stable Auslander-Reiten quiver of $kS_4$.
 In \S \ref{s:c0}--\S\ref{s:stableend},
 we prove Theorem
 \ref{thm:supermain}.
 
 Part of this paper constitutes the Ph.D. thesis of the second author under the supervision
 of the first author \cite{llosent}.


\section{Preliminaries}
\label{s:prelim}
\setcounter{equation}{0}
\setcounter{figure}{0}

Let $k$ be an algebraically closed field of characteristic $p>0$, let $W$ be the ring of infinite Witt 
vectors over $k$ and let $F$ be the fraction field of $W$. Let ${\mathcal{C}}$ be the category of 
all complete local commutative Noetherian rings with residue field $k$. The morphisms in 
${\mathcal{C}}$ are continuous $W$-algebra homomorphisms which induce the identity map on $k$. 

Suppose $G$ is a finite group and $V$ is a finitely generated $kG$-module. 
A lift of $V$ over an object $R$ in ${\mathcal{C}}$ is a finitely generated $RG$-module $M$ which
is free over $R$ together with a $kG$-module isomorphism 
$\phi:k\otimes_R M\to V$. Two lifts $(M,\phi)$ and $(M',\phi')$
of $V$ over $R$ are isomorphic if there is an $RG$-module isomorphism 
$\alpha:M\to M'$ such that $\phi'\circ (1\otimes\alpha) = \phi$. 
The isomorphism class of a lift of $V$ 
over $R$ is called a deformation of $V$ over $R$, and the set of such deformations is denoted by 
$\mathrm{Def}_G(V,R)$. The deformation functor ${F}_V:{\mathcal{C}} \to \mathrm{Sets}$
is defined to be the covariant functor which sends an object $R$ in ${\mathcal{C}}$ to 
$\mathrm{Def}_G(V,R)$.

Suppose there exists an object $R(G,V)$ in ${\mathcal{C}}$ and a lift $(U(G,V),\phi_U)$ 
of $V$ over $R(G,V)$ satisfying the following.
For each $R$ in ${\mathcal{C}}$ and for each lift $(M,\phi)$ of $V$ over $R$ there is a unique 
morphism $\alpha:R(G,V)\to R$ in ${\mathcal{C}}$ such that the lift $(M,\phi)$ is isomorphic to 
$(R\otimes_{R(G,V),\alpha}U(G,V),\phi_U)$ where we identify 
$k\otimes_R(R\otimes_{R(G,V),\alpha}U(G,V))$ with $k\otimes_{R(G,V)}U(G,V)$.
Then $R(G,V)$ is called the universal deformation ring of $V$ and the isomorphism class
of the lift $(U(G,V),\phi_U)$ is called the universal deformation of $V$. In other words, $R(G,V)$ represents
the functor ${F}_V$ in the sense that ${F}_V$ is naturally isomorphic to 
$\mathrm{Hom}_{{\mathcal{C}}}(R(G,V),-)$. For more information on deformation rings see 
\cite{lendesmit} and \cite{maz1}.

The following four results were proved in \cite{bc} and in \cite{3sim}, respectively. As before, $\Omega$ 
denotes the syzygy, or Heller, operator for $kG$ (see for example \cite[\S 20]{alp}).

\begin{prop}
\label{prop:stablend}
{\rm (\cite[Prop. 2.1]{bc}).}
Suppose $V$ is a finitely generated $kG$-module with stable endomorphism ring 
$\underline{\mathrm{End}}_{kG}(V)=k$.  Then $V$ has  a universal deformation ring $R(G,V)$.
\end{prop}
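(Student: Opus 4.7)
The plan is to verify Schlessinger's criteria for the deformation functor $F_V$ to be pro-representable by an object of $\mathcal{C}$. Conditions (H1), (H2), (H3) hold for any finitely generated $V$: $F_V(k)$ is a singleton; the fiber-product surjection $F_V(R_1 \times_{R_0} R_2) \twoheadrightarrow F_V(R_1) \times_{F_V(R_0)} F_V(R_2)$ is realized via the pullback $RG$-module construction; and the tangent space $F_V(k[\epsilon])$ identifies with the finite-dimensional $k$-vector space $\mathrm{Ext}^1_{kG}(V,V)$. Together these yield a versal hull $R^{\mathrm{vers}}(G,V)$ in $\mathcal{C}$.

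The heart of the proof is the remaining Schlessinger condition (H4), which upgrades the versal hull to a universal deformation ring. This is equivalent to the assertion that for every lift $(M,\phi)$ of $V$ over $R \in \mathcal{C}$, every $RG$-automorphism $\alpha$ of $M$ with $\phi \circ (1_k \otimes_R \alpha) = \phi$ induces the identity self-isomorphism of the deformation class $[M,\phi]$. In turn, this amounts to showing that $\mathrm{End}_{RG}(M)$ is a local $R$-algebra with residue field $k$, so that automorphisms reducing to $\mathrm{id}_V$ lie in $1 + \mathrm{rad}\,\mathrm{End}_{RG}(M)$ and act trivially on the deformation class.

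The hypothesis $\underline{\mathrm{End}}_{kG}(V) = k$ enters by forcing $\mathrm{End}_{kG}(V)$ to be local with residue field $k$. One first reduces to the case where $V$ has no nonzero projective summand, since projectives lift uniquely over $W$ and contribute only the trivial factor to the deformation ring. Then the two-sided ideal $\mathrm{PHom}_{kG}(V,V)$ of homomorphisms factoring through a projective consists of non-units in $\mathrm{End}_{kG}(V)$, for if $\alpha = fg$ with $g: V \to P$ and $f: P \to V$ (for some projective $P$) were an isomorphism, then $V$ would be a direct summand of $P$, contradicting the absence of projective summands. Thus $\mathrm{PHom}_{kG}(V,V)$ is the unique maximal ideal of $\mathrm{End}_{kG}(V)$, with quotient $k$. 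This locality lifts to $\mathrm{End}_{RG}(M)$ by induction on small extensions $R' \twoheadrightarrow R$: since $M$ is $R$-free, Nakayama's lemma implies that an endomorphism is a unit iff its reduction is, so the non-units form a proper two-sided ideal with quotient $k$.

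The main obstacle is controlling the interplay between the stable and genuine endomorphism rings under lifting: the hypothesis is stated in the stable category, while the condition to be verified concerns actual $RG$-endomorphisms of arbitrary lifts. Once locality of $\mathrm{End}_{RG}(M)$ is established for all $R \in \mathcal{C}$ and all lifts $M$, Schlessinger's fourth criterion follows by a routine diagram chase: any two isomorphisms between given lifts differ by a unit of $\mathrm{End}_{RG}(M)$ reducing to $1_V$, hence lying in $1 + \mathrm{rad}\,\mathrm{End}_{RG}(M)$, and such units give the trivial self-isomorphism on deformation classes, completing the proof of pro-representability.
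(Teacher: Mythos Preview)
The paper does not prove this proposition; it merely cites \cite[Prop.~2.1]{bc}. So there is no in-paper argument to compare against. That said, your sketch aims at the right target (Schlessinger's criteria) but has a real gap in the treatment of (H4).

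Your formulation of (H4) is garbled. Saying that an automorphism $\alpha$ of a lift $(M,\phi)$ ``induces the identity self-isomorphism of the deformation class $[M,\phi]$'' is vacuous: $\alpha$ is by definition an isomorphism from $(M,\phi)$ to itself, so the induced map on the class is trivially the identity. What (H4) actually demands is that for every small extension $A'\twoheadrightarrow A$ and every lift $M'$ over $A'$, every automorphism of $M_0 := A\otimes_{A'}M'$ reducing to $\mathrm{id}_V$ lifts to an automorphism of $M'$. Equivalently, one needs surjectivity of the relevant piece of $\mathrm{End}_{A'G}(M') \to \mathrm{End}_{AG}(M_0)$.

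Locality of $\mathrm{End}_{RG}(M)$ for each $R$ separately does not give this: the obstruction to lifting an endomorphism of $M_0$ to one of $M'$ lies in $\mathrm{Ext}^1_{kG}(V,V)\otimes_k I$ (where $I=\ker(A'\to A)$), which is typically nonzero. The step you are missing---and this is exactly the content that makes the stable hypothesis work where the naive hypothesis $\mathrm{End}_{kG}(V)=k$ would be needed---is that for every Artinian $R$ and every lift $M$ one has
\[
\mathrm{End}_{RG}(M)\;=\;R\cdot\mathrm{id}_M \;+\; \mathrm{PHom}_{RG}(M,M),
\]
i.e.\ the \emph{stable} endomorphism ring of $M$ is free of rank one over $R$. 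Granting this, an automorphism of $M_0$ reducing to $\mathrm{id}_V$ has the form $r\cdot\mathrm{id}+p$ with $r\in 1+\mathfrak{m}_A$ and $p$ factoring through a projective $AG$-module; the scalar lifts trivially to $A'$, and $p$ lifts because maps into and out of projectives lift along $A'\to A$ (the relevant $\mathrm{Ext}^1$ against a projective vanishes). Your write-up never isolates or proves this decomposition, and the ``routine diagram chase'' you invoke does not produce it.
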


\begin{lemma} 
\label{lem:defhelp}
{\rm (\cite[Cors. 2.5 and 2.8]{bc}).}
Let $V$ be a finitely generated $kG$-module with stable endomorphism ring 
$\underline{\mathrm{End}}_{kG}(V)=k$.
\begin{enumerate}
\item[i.] Then $\underline{\mathrm{End}}_{kG}(\Omega(V))=k$, and $R(G,V)$ and $R(G,\Omega(V))$ 
are isomorphic.
\item[ii.] There is a non-projective indecomposable $kG$-module $V_0$ $($unique up to
isomorphism$)$ such that $\underline{\mathrm{End}}_{kG}(V_0)=k$, $V$ is isomorphic to 
$V_0\oplus P$ for some projective $kG$-module $P$, and $R(G,V)$ and $R(G,V_0)$ are 
isomorphic.
\end{enumerate}
\end{lemma}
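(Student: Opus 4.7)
The equality $\underline{\mathrm{End}}_{kG}(\Omega(V))=k$ is a standard consequence of the fact that $kG$ is self-injective, so that $\Omega$ is an auto-equivalence of the stable module category $\underline{\mathrm{mod}}(kG)$; in particular it induces a $k$-algebra isomorphism $\underline{\mathrm{End}}_{kG}(V)\cong\underline{\mathrm{End}}_{kG}(\Omega(V))$. In particular, by Proposition \ref{prop:stablend}, the universal deformation ring $R(G,\Omega(V))$ exists. To show $R(G,V)\cong R(G,\Omega(V))$, I would produce a natural isomorphism of deformation functors $F_V\cong F_{\Omega(V)}$ and conclude by Yoneda's lemma. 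Given $R\in\mathcal{C}$ and a lift $(M,\phi)$ of $V$ over $R$, choose an $RG$-projective cover $P_R\to M$; since idempotents lift uniquely from $k$ to $R$ (as $R$ is complete local with residue field $k$), $P_R$ is the unique $RG$-lift of the $kG$-projective cover of $V$, and reduction mod $\mathfrak{m}_R$ sends this sequence to a projective cover of $V$. The kernel $\Omega_R(M)$ is then a finitely generated $RG$-module which is $R$-projective, hence $R$-free (as $R$ is Noetherian local), and satisfies $k\otimes_R\Omega_R(M)\cong\Omega(V)$ canonically. This assignment descends to isomorphism classes of lifts and is natural in $R$, giving a natural transformation $F_V\to F_{\Omega(V)}$; running the same construction with $\Omega^{-1}$ (available because $kG$ is self-injective, via injective hulls) gives an inverse.

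\textbf{Plan for part (ii).} Write $V=\bigoplus_i V_i^{n_i}\oplus P$ with the $V_i$ pairwise non-isomorphic indecomposable non-projectives and $P$ projective, using Krull--Schmidt. Since projective summands are killed in the stable category,
\[
\underline{\mathrm{End}}_{kG}(V)\;\cong\;\bigoplus_{i,j}\mathrm{Mat}_{n_j\times n_i}\!\bigl(\underline{\mathrm{Hom}}_{kG}(V_i,V_j)\bigr),
\]
and the diagonal block at each $i$ contains the matrix algebra $\mathrm{Mat}_{n_i}(\underline{\mathrm{End}}_{kG}(V_i))$. The hypothesis $\underline{\mathrm{End}}_{kG}(V)=k$ thus forces exactly one non-projective summand $V_0$, appearing with multiplicity one, and $\underline{\mathrm{End}}_{kG}(V_0)=k$; uniqueness of $V_0$ up to isomorphism again follows from Krull--Schmidt. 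So $V\cong V_0\oplus P$.

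\textbf{The isomorphism of deformation rings.} To identify $R(G,V)$ with $R(G,V_0)$, I would again exhibit a natural isomorphism $F_{V_0}\cong F_V$. The key input is that projective $kG$-modules lift uniquely, up to isomorphism, to any $R\in\mathcal{C}$: namely, there is a unique (up to isomorphism) $RG$-module $P_R$ that is $R$-free with $k\otimes_R P_R\cong P$, constructed by lifting a primitive idempotent decomposition of $kG$ through the surjection $RG\to kG$. Given a lift $(N,\psi)$ of $V_0$ over $R$, send it to the lift $(N\oplus P_R,\psi\oplus\mathrm{id})$ of $V$; conversely, given a lift $(M,\phi)$ of $V$ over $R$, use the fact that $P_R$ is a direct summand of $M$ (again by lifting the idempotent cutting $P$ out of $V$ through $RG\to kG$ and using that $M$ is $R$-free) to obtain a complementary summand $N$ which lifts $V_0$. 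Checking these assignments are well-defined on isomorphism classes and natural in $R$ gives the desired natural isomorphism $F_{V_0}\cong F_V$, whence $R(G,V)\cong R(G,V_0)$ by Yoneda.

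\textbf{Main obstacle.} The delicate point in both parts is checking that the constructions descend to well-defined maps on deformation classes (i.e.\ on isomorphism classes of lifts) and are natural in $R\in\mathcal{C}$. For part (i) this amounts to verifying that two isomorphic lifts of $V$ produce isomorphic kernels $\Omega_R(M)$, which rests on the uniqueness of projective covers in the $RG$-module category together with the freeness of $\Omega_R(M)$ over $R$. For part (ii) the core technical point is the uniqueness of the lift $P_R$ of the projective summand, which is exactly the lifting-idempotents theorem for the complete local ring $R$. Once these naturality statements are in hand, the isomorphisms $R(G,V)\cong R(G,\Omega(V))$ and $R(G,V)\cong R(G,V_0)$ are immediate from Yoneda applied to the representing objects of the deformation functors.
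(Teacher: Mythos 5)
The paper itself gives no proof of this lemma (it is quoted from \cite[Cors. 2.5 and 2.8]{bc}), so your argument has to stand on its own; its overall strategy is indeed the standard one behind the cited result: the Krull--Schmidt count in (ii) forcing a single non-projective summand is correct, and the syzygy-over-$RG$ construction in (i) is the right mechanism. But two steps are gapped as written. In (ii), ``lifting the idempotent cutting $P$ out of $V$ through $RG\to kG$'' does not parse: that idempotent lives in $\mathrm{End}_{kG}(V)$, not in $kG$, and the relevant reduction map $\mathrm{End}_{RG}(M)\to\mathrm{End}_{kG}(k\otimes_R M)$ is in general \emph{not} surjective, so you cannot simply lift it; this surjectivity issue is exactly the nontrivial point. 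The repair uses projectivity on both sides of the idempotent: lift the inclusion $P\hookrightarrow V$ to $f\colon P_R\to M$ because $P_R$ is $RG$-projective and $M\to k\otimes_R M\cong V$ is surjective; lift the projection $V\to P$ to $g\colon M\to P_R$ using that $\mathrm{Hom}_{RG}(M,P_R)\otimes_R k\to\mathrm{Hom}_{kG}(V,P)$ is surjective when the target is projective (for instance because $RG$ is a symmetric $R$-algebra, so $\mathrm{Hom}_{RG}(M,RG)\cong\mathrm{Hom}_R(M,R)$ and $M$ is $R$-free). Then $g\circ f$ reduces to $\mathrm{id}_P$, hence is an automorphism of $P_R$ (Nakayama, plus the fact that a surjective endomorphism of a finitely generated module is bijective), so $P_R$ splits off $M$ with an $R$-free complement lifting $V_0$; Krull--Schmidt for $RG$-lattices (valid since $R$ is complete local) then gives well-definedness on deformation classes.

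In (i), the ``main obstacle'' you flag is real, but ``uniqueness of projective covers'' by itself does not dispose of it --- non-canonicity of that uniqueness is the source of the problem, not its solution. The identification $k\otimes_R\Omega_R(M)\cong\Omega(V)$ requires choosing an isomorphism between the reduction of $P_R\to M$ and the fixed projective cover $P_{V}\to V$; two choices differ by an automorphism of $P_V$ over $V$, which induces on $\Omega(V)$ an automorphism of the form $\mathrm{id}+h$ with $h$ factoring through $P_V$. To see that the two resulting lifts give the same deformation you must lift this automorphism to an $RG$-automorphism of $\Omega_R(M)$; this works because $\mathrm{Hom}_{RG}(P_R,\Omega_R(M))\to\mathrm{Hom}_{kG}(P_V,\Omega(V))$ is surjective ($P_R$ is projective) and an endomorphism of the $R$-free module $\Omega_R(M)$ whose reduction is bijective is an automorphism --- but this argument needs to be made, and it is not implied by uniqueness of covers plus $R$-freeness alone. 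A minor further point: the inverse functor should not be built from injective hulls of $RG$-modules (these are not finitely generated when $R$ is not Artinian); define $\Omega^{-1}_R$ via the $R$-dual of a projective cover of the $R$-dual, using again that $RG$ is symmetric. With these repairs your plan does yield the statement; note also that the first claim of (i), $\underline{\mathrm{End}}_{kG}(\Omega(V))=k$, is fine as you argue it.
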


\begin{prop}
\label{prop:induceddef}
{\rm (\cite[Prop. 2.1.3]{3sim}).}
Let $L$ be a subgroup of $G$ and let $U$ be a finitely generated indecomposable $kL$-module with 
$\underline{\mathrm{End}}_{kL}(U)= k$. Suppose there exists an indecomposable $kG$-module $V$ 
with $\underline{\mathrm{End}}_{kG}(V)= k$ and a projective $kG$-module $P$ such that 
$\mathrm{Ind}_L^G U = V\oplus P$.
Assume further that
$$\mathrm{dim}_k \mathrm{Ext}_{kL}^1(U,U) = \mathrm{dim}_k \mathrm{Ext}_{kG}^1(V,V).$$
Then $R(G,V)$ is isomorphic to $R(L,U)$.
\end{prop}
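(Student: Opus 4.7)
The plan is to construct a natural transformation $T : F_U \to F_V$ via induction of modules and idempotent lifting, apply Yoneda to obtain a ring morphism $\alpha : R(G,V) \to R(L,U)$, and then prove $\alpha$ is an isomorphism; the tangent-space comparison will be driven by Frobenius reciprocity together with the Ext-dimension hypothesis. For $(M,\phi) \in F_U(R)$ with $R \in \mathcal{C}$, the module $\mathrm{Ind}_L^G M = RG \otimes_{RL} M$ is free over $R$ (since $RG$ is free as a right $RL$-module and $M$ is free over $R$) and has mod-$\mathfrak{m}_R$ reduction $\mathrm{Ind}_L^G U = V \oplus P$. Because $R$ is complete local and $\mathrm{End}_{RG}(\mathrm{Ind}_L^G M)$ is finitely generated over $R$, the idempotent projecting $V \oplus P$ onto $V$ lifts to $\mathrm{End}_{RG}(\mathrm{Ind}_L^G M)$, yielding $\mathrm{Ind}_L^G M \cong \widetilde{V}_M \oplus \widetilde{P}_M$ with $\widetilde{V}_M$ a lift of $V$ and $\widetilde{P}_M$ a (projective) lift of $P$. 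Setting $T_R([M,\phi]) = [\widetilde{V}_M]$ and using that induction commutes with base change makes $T$ a natural transformation $F_U \to F_V$. Applying $T$ to the universal lift of $U$ over $R(L,U)$ and invoking the universal property of $R(G,V)$ produces a unique $\alpha : R(G,V) \to R(L,U)$ in $\mathcal{C}$.

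To prove $\alpha$ is surjective, I would analyse $T$ on tangent spaces. Under the identifications $F_U(k[\epsilon]/(\epsilon^2)) \cong \mathrm{Ext}^1_{kL}(U,U)$ and $F_V(k[\epsilon]/(\epsilon^2)) \cong \mathrm{Ext}^1_{kG}(V,V)$, the map $T_{k[\epsilon]/(\epsilon^2)}$ is the composite
\[
\mathrm{Ext}^1_{kL}(U,U) \longrightarrow \mathrm{Ext}^1_{kG}(\mathrm{Ind}_L^G U,\mathrm{Ind}_L^G U) = \mathrm{Ext}^1_{kG}(V,V),
\]
where the equality uses projectivity of $P$. By Frobenius reciprocity the first arrow equals the map $\mathrm{Ext}^1_{kL}(U,U) \to \mathrm{Ext}^1_{kL}(U, \mathrm{Res}_L^G \mathrm{Ind}_L^G U)$ induced by the unit $U \hookrightarrow \mathrm{Res}_L^G \mathrm{Ind}_L^G U$, and this unit is split (by Mackey's formula $U$ is the $1$-coset summand of $\mathrm{Res}_L^G \mathrm{Ind}_L^G U$), so the arrow is injective. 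Combining this with the hypothesis $\dim_k \mathrm{Ext}^1_{kL}(U,U) = \dim_k \mathrm{Ext}^1_{kG}(V,V)$, the tangent map is a bijection, and the Nakayama-type criterion for morphisms of complete local $W$-algebras gives surjectivity of $\alpha$.

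The main obstacle is injectivity of $\alpha$: surjectivity combined with an isomorphism on tangent spaces does not force isomorphism of rings in $\mathcal{C}$ in general (consider the surjection $W[[x]] \to W[[x]]/(x^n)$). I would resolve this by showing $T_R$ is bijective for every $R \in \mathcal{C}$, whereupon Yoneda forces $\alpha$ to be an isomorphism. Since $\underline{\mathrm{End}}_{kL}(U) = k$ is nonzero, $U$ is non-projective, and Mackey then places $U$ as a summand of $\mathrm{Res}_L^G V$ (the $\mathrm{Res}_L^G P$ term is projective, so it cannot absorb the non-projective $U$-component of $\mathrm{Res}_L^G \mathrm{Ind}_L^G U$). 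Given a lift $(N,\psi)$ of $V$ over $R$, I would lift the idempotent in $\mathrm{End}_{kL}(\mathrm{Res}_L^G V)$ projecting onto this $U$-summand to an idempotent in $\mathrm{End}_{RL}(\mathrm{Res}_L^G N)$, extracting a lift $M$ of $U$ over $R$; Krull--Schmidt over complete local $R$ combined with the identification $\mathrm{Ind}_L^G M \cong N \oplus P_R$ (where $P_R$ is the canonical projective lift of $P$) then shows $T_R([M]) = [N]$ with $M$ unique up to isomorphism. Thus $T_R$ is bijective for every $R$, $T$ is a natural isomorphism, and $\alpha$ is an isomorphism, giving $R(G,V) \cong R(L,U)$.
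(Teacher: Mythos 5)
Your first two paragraphs are essentially sound: inducing a lift, splitting off the projective part, and getting surjectivity of $\alpha$ from the Eckmann--Shapiro/Mackey injectivity of $\mathrm{Ext}^1_{kL}(U,U)\to\mathrm{Ext}^1_{kG}(V,V)$ together with the dimension hypothesis is the right mechanism (and matches what any proof must do; note the paper itself gives no proof but cites \cite[Prop.~2.1.3]{3sim}). One caveat even there: ``the idempotent lifts'' is not literally available, because the reduction map $\mathrm{End}_{RG}(\mathrm{Ind}_L^G M)\to\mathrm{End}_{kG}(V\oplus P)$ need not be surjective, so there is no ring surjection to which idempotent lifting applies. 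The step is nevertheless repairable precisely because the summand being split off is projective: $P$ has a unique projective $RG$-lift $\widetilde P$, and since $kG$ and $RG$ are self-injective/symmetric one can lift the projection $\mathrm{Ind}_L^G U\to P$ to a map $\mathrm{Ind}_L^G M\to\widetilde P$ and split off $\widetilde P$ by a Nakayama argument.

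The genuine gap is your third paragraph, and it is exactly at the point you flagged as the main obstacle. Given an arbitrary lift $(N,\psi)$ of $V$ over $R$, there is no justification for ``lifting the idempotent in $\mathrm{End}_{kL}(\mathrm{Res}_L^G V)$ projecting onto the $U$-summand to $\mathrm{End}_{RL}(\mathrm{Res}_L^G N)$'': the reduction map $\mathrm{End}_{RL}(\mathrm{Res}_L^G N)\to\mathrm{End}_{kL}(\mathrm{Res}_L^G V)$ is in general not surjective, and here the complement $Y$ of $U$ in $\mathrm{Res}_L^G V$ is typically non-projective, so the projective-summand trick that rescues your first step is unavailable; direct-sum decompositions of the special fibre simply do not deform in general. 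A structural reason the argument cannot work as written: your third paragraph never uses the hypothesis $\dim_k\mathrm{Ext}^1_{kL}(U,U)=\dim_k\mathrm{Ext}^1_{kG}(V,V)$, yet it would prove $T_R$ bijective for every $R$ and hence $R(G,V)\cong R(L,U)$ unconditionally. But by Eckmann--Shapiro and Mackey, $\mathrm{Ext}^1_{kG}(V,V)\cong\mathrm{Ext}^1_{kL}(U,U)\oplus\mathrm{Ext}^1_{kL}\bigl(U,\bigoplus_{x\neq 1}\mathrm{Ind}_{L\cap{}^xL}^L({}^xU)\bigr)$, and whenever the second summand is nonzero the tangent spaces have different dimensions, so already $T_{k[\epsilon]/(\epsilon^2)}$ fails to be surjective; thus the restriction/idempotent construction must break down in general. (In addition, even granting a summand $M$ of $\mathrm{Res}_L^G N$ lifting $U$, the assertion that $\mathrm{Ind}_L^G M\cong N\oplus P_R$ as lifts, i.e.\ that $T_R([M])=[N]$ rather than merely some deformation of $V$, is not supplied by Krull--Schmidt alone.) So injectivity of $\alpha$ remains unproven, and this is the substantive content you would need to recover from \cite[Prop.~2.1.3]{3sim} or establish by a different argument (for instance, a careful comparison of obstruction classes under induction, using injectivity of the corresponding map on $\mathrm{Ext}^2$, rather than an objectwise inverse of $T$).
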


\begin{lemma}
\label{lem:Wlift}
{\rm (\cite[Lemma 2.3.2]{3sim}).}
Let $V$ be a finitely generated $kG$-module such that there is a non-split short exact sequence of 
$kG$-modules
$$0\to Y_2\to V\to Y_1\to 0$$
with $\mathrm{Ext}^1_{kG}(Y_1,Y_2)=k$.
Suppose that for $i\in\{1,2\}$, there exists a $WG$-module $X_i$ which defines a lift of $Y_i$ over $W$. 
Suppose further that 
$$\mathrm{dim}_F\;\mathrm{Hom}_{FG}(F\otimes_WX_1,F\otimes_WX_2) =
\mathrm{dim}_k\;\mathrm{Hom}_{kG}(Y_1,Y_2)-1.$$
Then there exists a $WG$-module $X$ which defines a lift of $V$ over $W$.
\end{lemma}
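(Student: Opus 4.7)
The plan is to produce the lift $X$ as the middle term of a short exact sequence
\[
0 \to X_2 \to X \to X_1 \to 0
\]
of $WG$-modules whose reduction modulo $p$ recovers the given extension $0\to Y_2\to V\to Y_1\to 0$. Since $X_1$ and $X_2$ are $W$-free, any such middle term $X$ will automatically be $W$-free, and reduction mod $p$ will give a non-split short exact sequence with middle term isomorphic to $V$. Thus the task reduces to showing that the class of $V$ in $\mathrm{Ext}^1_{kG}(Y_1,Y_2)$ is in the image of the reduction map $\mathrm{Ext}^1_{WG}(X_1,X_2)\to \mathrm{Ext}^1_{kG}(Y_1,Y_2)$.

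To that end, I would choose a projective resolution $P_\bullet\to X_1$ over $WG$. Because each $P_i$ is $W$-free, the short exact sequence of $WG$-modules
\[
0\to X_2 \xrightarrow{p} X_2 \to Y_2 \to 0
\]
stays exact after applying $\mathrm{Hom}_{WG}(P_\bullet,-)$, and $k\otimes_W P_\bullet$ is a projective resolution of $Y_1$ over $kG$ satisfying $\mathrm{Hom}_{kG}(k\otimes_W P_\bullet,Y_2)=\mathrm{Hom}_{WG}(P_\bullet,Y_2)$. The resulting long exact sequence in cohomology yields, for each $i\ge 0$, a short exact sequence
\[
0\to \mathrm{Ext}^i_{WG}(X_1,X_2)/p \to \mathrm{Ext}^i_{kG}(Y_1,Y_2)\to {}_p\mathrm{Ext}^{i+1}_{WG}(X_1,X_2)\to 0,
\]
where ${}_p M$ denotes the $p$-torsion of $M$.

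Applying this with $i=0$: since $\mathrm{Hom}_{WG}(X_1,X_2)$ is a $W$-submodule of the $W$-free module $\mathrm{Hom}_W(X_1,X_2)$ and $W$ is a discrete valuation ring, $\mathrm{Hom}_{WG}(X_1,X_2)$ is $W$-free of rank equal to $\dim_F\mathrm{Hom}_{FG}(F\otimes_W X_1,F\otimes_W X_2)$. The dimension hypothesis then forces $\dim_k {}_p\mathrm{Ext}^1_{WG}(X_1,X_2) = \dim_k\mathrm{Hom}_{kG}(Y_1,Y_2) - \dim_F\mathrm{Hom}_{FG}(F\otimes X_1,F\otimes X_2) = 1$. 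Now $\mathrm{Ext}^1_{WG}(X_1,X_2)$ is a finitely generated $W$-module, hence by the structure theorem splits as $W^a\oplus \bigoplus_{j=1}^s W/p^{n_j}$; the computation just given gives $s=1$, so $\dim_k \mathrm{Ext}^1_{WG}(X_1,X_2)/p = a+1 \ge 1$.

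Feeding this into the case $i=1$ of the displayed short exact sequence, and using $\dim_k \mathrm{Ext}^1_{kG}(Y_1,Y_2)=1$, we see that the injection $\mathrm{Ext}^1_{WG}(X_1,X_2)/p\hookrightarrow \mathrm{Ext}^1_{kG}(Y_1,Y_2)$ is forced to be an isomorphism (and incidentally $a=0$). In particular, the class of the given non-split extension lifts to an element $\xi \in \mathrm{Ext}^1_{WG}(X_1,X_2)$; the corresponding $WG$-module $X$ is $W$-free, and its reduction modulo $p$ is a non-split extension of $Y_1$ by $Y_2$ whose class is $[V]$, so $k\otimes_W X \cong V$, producing the desired lift. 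The only nontrivial step is the dimension bookkeeping in the long exact sequence combined with the DVR structure of $W$; the rest is formal homological algebra.
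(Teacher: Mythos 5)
Your argument is correct: the reduction long exact sequence obtained by applying $\mathrm{Hom}_{WG}(P_\bullet,-)$ to $0\to X_2\xrightarrow{\,p\,}X_2\to Y_2\to 0$, together with the identifications $\mathrm{Ext}^i_{WG}(X_1,Y_2)\cong \mathrm{Ext}^i_{kG}(Y_1,Y_2)$ and the rank count $\mathrm{rank}_W\,\mathrm{Hom}_{WG}(X_1,X_2)=\dim_F\mathrm{Hom}_{FG}(F\otimes_W X_1,F\otimes_W X_2)$, does force the map $\mathrm{Ext}^1_{WG}(X_1,X_2)\to\mathrm{Ext}^1_{kG}(Y_1,Y_2)$ to be surjective, and the middle term of a lifted extension is $W$-free with reduction $V$. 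This paper gives no proof of the lemma (it is quoted from \cite[Lemma 2.3.2]{3sim}), and your homological counting argument is essentially the standard one used there.
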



\section{The group ring $kS_4$}
\label{s:ks4}
\setcounter{equation}{0}
\setcounter{figure}{0}

Let $k$ be an algebraically closed field of characteristic $2$, let $W$ be the ring of infinite
Witt vectors over $k$ and let $F$ be the fraction field of $W$.
Then $kS_4$ is its own
principal block, and the defect groups are the Sylow $2$-subgroups of $S_4$ which
are dihedral groups of order $8$. By \cite[\S V.2.5.1]{erd}, $kS_4$ is Morita equivalent to the special 
biserial algebra $\Lambda=kQ/I$ where
\begin{equation}
\label{eq:qi}
\xymatrix @R=-.2pc {
&0&1\\
Q=\quad& \ar@(ul,dl)_{\alpha} \bullet \ar@<.8ex>[r]^{\beta} &\bullet\ar@<.9ex>[l]^{\gamma}
\ar@(ur,dr)^{\eta}&
\quad\mbox{ and }\quad  I=\langle \alpha^2,\eta\beta,\gamma\eta,\beta\gamma,
\gamma\beta\alpha-\alpha\gamma\beta,\eta^2-\beta\alpha\gamma\rangle.}
\end{equation}
We denote the irreducible $\Lambda$-modules by $S_0$ and $S_1$, or, using short-hand, by $0$
and $1$. Then $S_0$ corresponds to the trivial simple $kS_4$-module $T_0$, and
$S_1$ corresponds to the two-dimensional simple $kS_4$-module $T_1$.
The radical series of the projective indecomposable $\Lambda$-modules (and hence of the projective indecomposable $kS_4$-modules) can be described by the following pictures:
\begin{equation}
\label{eq:proj}
P_0=\begin{array}{c@{\hspace*{.5ex}}c@{\hspace*{.5ex}}c}&0&\\
0&&1\\1&&0\\&0&\end{array},\qquad 
P_1=\begin{array}{c@{\hspace*{.5ex}}c@{\hspace*{.5ex}}c}&1&\\ 1&&
\begin{array}{c}0\\0\end{array}\\&1&\end{array}.
\end{equation}
The field $F$ is a splitting field for $S_4$, and the decomposition matrix of $kS_4$ has the form
\begin{equation}
\label{eq:decoms4}
\begin{array}{cc}
&\begin{array}{c@{}c}\varphi_0\,&\,\varphi_1\end{array}\\[1ex]
\begin{array}{c}\chi_1\\ \chi_2\\ \chi_3 \\ \chi_4\\ \chi_5\end{array} &
\left[\begin{array}{cc}1&0\\1&0\\1&1\\1&1\\0&1\end{array}\right].
\end{array}
\end{equation}

Because $\Lambda$ is a special biserial algebra, all indecomposable non-projective 
$\Lambda$-modules
are either  string or band modules. Since we will use this description extensively,
we now give a brief introduction. For more background material, we refer to \cite{buri}.

For each arrow $\alpha,\beta,\gamma,\eta$ in $Q$, we define formal inverses
$\alpha^{-1},\beta^{-1},\gamma^{-1},\eta^{-1}$ with starting points $s(\alpha^{-1})=0=
s(\gamma^{-1})$ and $s(\beta^{-1})=1=s(\eta^{-1})$ and end points $e(\alpha^{-1})=0=
e(\beta^{-1})$ and $e(\gamma^{-1})=1=e(\eta^{-1})$.
A word $w$ is a sequence $w_1\cdots w_n$, where $w_i$ is either an arrow or a formal inverse such 
that $s(w_i)=e(w_{i+1})$ for $1\leq i \leq n-1$. Define $s(w)=s(w_n)$, $e(w)=e(w_1)$ and 
$w^{-1}=w_n^{-1}\cdots w_1^{-1}$. There are also empty words $1_0$ and $1_1$ of length $0$ with 
$e(1_0)=0=s(1_0)$, $e(1_1)=1=s(1_1)$ and $(1_0)^{-1}=1_0$, $(1_1)^{-1}=1_1$. 
Denote the set of all words by $\mathcal{W}$, and the set of all 
non-empty words $w$ with $e(w)=s(w)$ by ${\mathcal{W}}_r$. 
Let $J=\{\alpha^2,\eta\beta,\beta\gamma,\gamma\eta,\eta^2,
\gamma\beta\alpha,\alpha\gamma\beta,\beta\alpha\gamma\}$.

\begin{dfn}
\label{def:strings}
Let $\sim_s$ be the equivalence relation on $\mathcal{W}$ with $w\sim_s w'$ if and only if $w=w'$ or 
$w^{-1}=w'$. Then strings are representatives $w\in{\mathcal{W}}$ of the equivalence classes under 
$\sim_s$ with the following property: Either $w=1_u$ for $u\in\{0,1\}$, or $w=w_1\cdots w_n$ where 
$w_i \neq w_{i+1}^{-1}$ for $1\leq i\leq n-1$ and no subword of $w$ or its formal inverse belongs to 
$J$.

Let $C=w_1\cdots w_n$ be a string of length $n$. Then there exists an
indecomposable $\Lambda$-module $M(C)$,
called the string module $M(C)$ corresponding to the string $C$, which can be described as follows.
There is a $k$-basis $\{z_0,z_1,\ldots, z_n\}$ of $M(C)$ such that the action of $\Lambda$ on $M(C)$ 
is given by the following representation $\varphi_C:\Lambda\to\mathrm{Mat}(n+1,k)$. 
Let $v(i)=e(w_{i+1})$ for $0\leq i\leq n-1$ and $v(n)=s(w_n)$. Then for each vertex $u\in\{0,1\}$ and for 
each arrow $\xi\in\{\alpha,\beta,\gamma,\eta\}$ in $Q$
$$\varphi_C(u)(z_i) = \left\{ \begin{array}{c@{\quad,\quad}l}
z_i & \mbox{if $v(i)=u$}\\ 0 & \mbox{else} \end{array} \right\}\; \mbox{ and } \;
\varphi_C(\xi)(z_i) = \left\{ \begin{array}{c@{\quad,\quad}l}
z_{i-1} & \mbox{if $w_i=\xi$}\\ z_{i+1} & \mbox{if $w_{i+1}=\xi^{-1}$}\\
0 & \mbox{else}
\end{array} \right\} .$$
We will call $\varphi_C$ the canonical representation and $\{z_0,z_1,\ldots,z_n\}$ the canonical $k$-basis for $M(C)$ relative to the representative $C$. Note that $M(C)\cong M(C^{-1})$. 

The string modules for the empty words are isomorphic to the simple $\Lambda$-modules,
namely $M(1_0)\cong S_0$ and $M(1_1)\cong S_1$.
\end{dfn}

\begin{dfn}
\label{def:bands}
Let $w=w_1\cdots w_n\in {\mathcal{W}}_r$. Then, for $0\leq i\leq n-1$, the $i$-th rotation of $w$ is 
defined to be the word $\rho_i(w)=w_{i+1}\cdots w_n w_1 \cdots w_i$. Let $\sim_r$ be the 
equivalence relation on ${\mathcal{W}}_r$ such that
$w\sim_r w'$ if and only if $w=\rho_i(w')$ for some $i$ or $w^{-1}=\rho_j(w')$ for some $j$. 
Then bands are representatives $w\in {\mathcal{W}}_r$ of the equivalence classes under 
$\sim_r$ with the following property: 
$w=w_1\cdots w_n$, $n\ge 1$, with $w_i\neq w_{i+1}^{-1}$ and $w_n\neq w_1^{-1}$, such that 
$w$ is not a power of a smaller word, and, for all positive integers $m$, no subword of $w^m$ 
or its formal inverse belongs to $J$.

Let $B=w_1\cdots w_n$ be a band of length $n$. Then for each integer $m>0$ and each 
$\lambda\in k^*$ there exists an indecomposable $\Lambda$-module $M(B,\lambda,m)$ which is
called the band module corresponding to the band $B$, $\lambda$ and $m$.
Note that for all $i,j$
$$M(B,\lambda,m)\cong M(\rho_i(B),\lambda,m)\cong M(\rho_j(B)^{-1},\lambda,m).$$ 
\end{dfn}

Each component of the stable Auslander-Reiten quiver of $\Lambda$ consists either entirely of string 
modules or entirely of band modules. The band modules all lie in $1$-tubes. The components
consisting of string modules are one $1$-tube, one $3$-tube and infinitely many non-periodic 
components of type $\mathbb{Z}A_\infty^\infty$. 

The irreducible morphisms between string modules can be described using hooks and cohooks.  
For our algebra $\Lambda$, these are defined as follows. Let $\mathcal{M}$ be the set of 
maximal directed strings, i.e. $\mathcal{M}=\{\gamma\beta,\beta\alpha,\alpha\gamma,\eta\}$.

\begin{dfn}
\label{def:arcomps}
Let $S$ be a string.
We say that $S$ starts on a peak (resp. starts in a deep) if $S=S'C$ (resp. $S=S'C^{-1}$)
for some string $C$ in $\mathcal{M}$. Dually, we say that $S$ ends on a peak (resp. ends in a deep)
if $S=D^{-1}S''$ (resp. $S=DS''$) for some string $D$ in $\mathcal{M}$.

If $S$ does not start on a peak (resp. does not start in a deep), there is a unique arrow $\zeta$ and a 
unique $M\in\mathcal{M}$ such that $S_h=S\zeta M^{-1}$ (resp. $S_c=S\zeta^{-1}M$) is a string. 
We say $S_h$ (resp. $S_c$) is obtained from $S$ by adding a hook (resp. a cohook) on 
the right side.

Dually, if $S$ does not end on a peak (resp. does not end in a deep), there is a unique arrow $\xi$ 
and a unique $N\in\mathcal{M}$ such that ${}_hS=M\xi^{-1}S$ (resp. ${}_cS=N^{-1}\xi S$)
is a string.  We say ${}_hS$ (resp. ${}_cS$) is obtained from $S$ by adding a hook (resp. a cohook) 
on the left side.
\end{dfn}

All irreducible morphisms between string modules are either canonical injections
$M(S)\to M(S_h)$, $M(S)\to M({}_hS)$,
or canonical projections
$M(S_c)\to M(S)$, $M({}_cS)\to M(S)$.

In particular, since none of the projective $\Lambda$-modules is uniserial, we get the following result.
Suppose $S$ is a string of minimal positive length in a component of the stable Auslander-Reiten
quiver of $\Lambda$ of type $\mathbb{Z}A_\infty^\infty$. 
Then near $M(S)$ the stable Auslander-Reiten component looks as in Figure \ref{fig:arbcomp}.
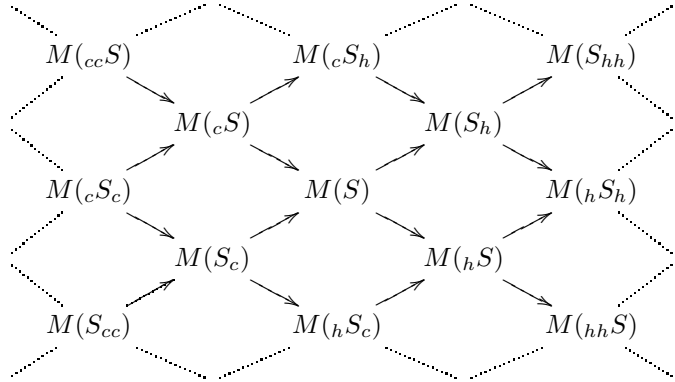
\begin{figure}[ht] \hrule \caption{\label{fig:arbcomp} The stable Auslander-Reiten component 
near $M(S)$.}
$$\xymatrix @-1.2pc{
&&&&&&\\
&M({}_{cc}S)\ar[rd]\ar@{.}[lu]\ar@{.}[ld]\ar@{.}[ru]&&
M({}_cS_h)\ar[rd]\ar@{.}[ru]\ar@{.}[lu]
&&M(S_{hh})\ar@{.}[ru]\ar@{.}[rd]\ar@{.}[lu]&\\
&&M({}_{c}S)\ar[rd]\ar[ru]&&M(S_{h})\ar[rd]\ar[ru]&&\\
&M({}_cS_c)\ar[ru]\ar[rd]\ar@{.}[lu]\ar@{.}[ld]&&M(S)\ar[ru]\ar[rd]&&M({}_hS_h)\ar@{.}[ru]\ar@{.}[rd]&\\
&&M(S_{c})\ar[ru]\ar[rd]\ar@{.}[ld]&&M({}_{h}S)\ar[ru]\ar[rd]&&\\
&M(S_{cc})\ar[ru]\ar@{.}[ld]\ar@{.}[lu]\ar@{.}[rd]&&
M({}_hS_c)\ar[ru]\ar@{.}[rd]\ar@{.}[ld]
&&M({}_{hh}S)\ar@{.}[rd]\ar@{.}[ru]\ar@{.}[ld]&\\
&&&&&&\\&&&&&&
}$$
\hrule
\end{figure}

In \cite{krau}, all homomorphisms between string and band modules have been determined.
The following remark describes the homomorphisms between string modules using the
canonical bases defined in Definition \ref{def:strings}.

\begin{rem}
\label{rem:stringhoms}
Let $M(S)$ (resp. $M(T)$) be a string module with canonical $k$-basis $\{x_u\}_{u=0}^m$ (resp. $\{y_v\}_{v=0}^n$) relative to the representative $S$ (resp. $T$).
Suppose $C$ is a string such that
\begin{enumerate}
\item[i.] $S\sim_s S'CS''$ with ($S'$ of length $0$ or $S'=\hat{S}'\zeta_1$) and ($S''$ of length $0$ or 
$S''=\zeta_2^{-1} \hat{S}''$), where $S',\hat{S}',S'',\hat{S}''$ are strings and $\zeta_1,\zeta_2$ are arrows in $Q$; 
and
\item[ii.] $T\sim_sT'CT''$ with ($T'$ of length $0$ or $T'=\hat{T}'\xi_1^{-1} $) and ($T''$ of length $0$ or 
$T''=\xi_2 \hat{T}''$), where $T',\hat{T}',T'',\hat{T}''$ are strings and $\xi_1,\xi_2$ are arrows in $Q$.
\end{enumerate} 
Then there exists a non-zero $\Lambda$-module homomorphism $\sigma_C:M(S)\to M(T)$ which factors through $M(C)$ and which sends 
each element of $\{x_u\}_{u=0}^m$ either to zero or to an element of $\{y_v\}_{v=0}^n$,
according to the relative position of $C$ in $S$ and $T$, respectively.
If e.g. $S=s_1s_2\cdots s_m$, $T=t_1t_2\cdots t_n$, and $C=s_{i+1}s_{i+2}\cdots s_{i+\ell} = t_{j+\ell}^{-1}t_{j+\ell-1}^{-1}\cdots t_{j+1}^{-1}$,
then 
$$\sigma_C(x_{i+t})=y_{j+\ell-t} \mbox{ for } 0\le t\le \ell, \mbox{ and } \sigma_C(x_u)=0\mbox{ for all other $u$.}$$
Note that there may be several choices of $S',S''$ (resp. $T',T''$) in (i) (resp. (ii)). In other words, there 
may be several $k$-linearly independent homomorphisms factoring through $M(C)$. By \cite{krau}, every 
$\Lambda$-module homomorphism $\sigma:M(S)\to M(T)$ is a $k$-linear combination of 
homomorphisms which factor through string modules corresponding to strings $C$ satisfying 
(i) and (ii). 
\end{rem}

The next result is an easy consequence of 
\cite{krau}
and the symmetric shapes of the radical series of $P_0$ and $P_1$ as given in 
$(\ref{eq:proj})$.

\begin{lemma}
\label{lem:upsidedown}
Let $\nu$ be the bijection on the arrows in $Q$ which fixes $\alpha$ and 
$\eta$ and interchanges $\beta$ and $\gamma$.
Let $S=w_1w_2\cdots w_n$ be a string of length $n\ge 1$, and define 
$\nu(S)=\nu(w_1)^{-1}\nu(w_2)^{-1}
\cdots \nu(w_n)^{-1}$. Then $\nu(S)$ is a string,
$\mathrm{dim}_k\,\underline{\mathrm{End}}_\Lambda(M(S))=
\mathrm{dim}_k\,\underline{\mathrm{End}}_\Lambda(M(\nu(S)))$,
and $\mathrm{dim}_k\,\underline{\mathrm{Ext}}^1_\Lambda(M(S),M(S))=
\mathrm{dim}_k\,\underline{\mathrm{Ext}}^1_\Lambda(M(\nu(S)),M(\nu(S)))$.
\end{lemma}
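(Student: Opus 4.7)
The plan is to recognize $\nu$ as the combinatorial shadow of an algebra anti-automorphism $\tau$ of $\Lambda$, and to deduce the lemma from standard properties of the induced $k$-duality. First, I would verify that the $k$-linear map $\tau\colon\Lambda\to\Lambda$ defined on arrows by $\tau(\alpha)=\alpha$, $\tau(\eta)=\eta$, $\tau(\beta)=\gamma$, $\tau(\gamma)=\beta$, and extended by $\tau(xy)=\tau(y)\tau(x)$, descends to a well-defined anti-automorphism of $\Lambda=kQ/I$. This reduces to checking $\tau(I)\subseteq I$ on each generator: e.g.\ $\tau(\alpha^2)=\alpha^2$, $\tau(\eta\beta)=\gamma\eta$, $\tau(\beta\gamma)=\beta\gamma$, $\tau(\gamma\eta)=\eta\beta$, $\tau(\gamma\beta\alpha-\alpha\gamma\beta)=-(\gamma\beta\alpha-\alpha\gamma\beta)$, and $\tau(\eta^2-\beta\alpha\gamma)=\eta^2-\beta\alpha\gamma$. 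The anti-automorphism $\tau$ is the algebraic incarnation of the left-right mirror symmetry of the radical-series pictures $(\ref{eq:proj})$ of $P_0$ and $P_1$.

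Next I would use $\tau$ to build the contravariant $k$-linear self-equivalence $D_\tau$ of $\Lambda\text{-mod}$ sending $M$ to $\mathrm{Hom}_k(M,k)$, where $\Lambda$ acts by $(\lambda\cdot f)(m)=f(\tau(\lambda)m)$. Since $\Lambda$ is Morita equivalent to the symmetric algebra $kS_4$, it is self-injective, so $D_\tau$ preserves projectives and descends to a contravariant self-equivalence of the stable category $\underline{\Lambda\text{-mod}}$. Standard generalities about contravariant equivalences then yield
\[
\dim_k\underline{\mathrm{End}}_\Lambda(D_\tau M)=\dim_k\underline{\mathrm{End}}_\Lambda(M)\quad\text{and}\quad\dim_k\underline{\mathrm{Ext}}^1_\Lambda(D_\tau M,D_\tau M)=\dim_k\underline{\mathrm{Ext}}^1_\Lambda(M,M)
\]
for every finitely generated $\Lambda$-module $M$.

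The remaining ingredient is the identification $D_\tau M(S)\cong M(\nu(S))$, which simultaneously shows that $\nu(S)$ is a string. Let $\{z_0,\ldots,z_n\}$ be the canonical $k$-basis of $M(S)$ from Definition \ref{def:strings}, and let $\{z_0^*,\ldots,z_n^*\}$ be the dual basis of $D_\tau M(S)$. Using $(\xi\cdot z_i^*)(z_j)=z_i^*(\tau(\xi)z_j)$, a short computation shows that $\xi\cdot z_i^*=z_{i-1}^*$ iff $w_i=\tau(\xi)^{-1}$ and $\xi\cdot z_i^*=z_{i+1}^*$ iff $w_{i+1}=\tau(\xi)$; equivalently, setting $t_i=\tau(w_i)^{-1}=\nu(w_i)^{-1}$ reproduces exactly the canonical representation rule of Definition \ref{def:strings} for the sequence $T=t_1\cdots t_n=\nu(S)$. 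Since $D_\tau M(S)$ is a bona fide $\Lambda$-module, this confirms both that $\nu(S)$ is a valid string and that the two canonical bases match up to the claimed isomorphism. The dimension equalities of the lemma now follow from the previous paragraph. The only real work is the bookkeeping in this identification of canonical bases; checking the combinatorial string conditions for $\nu(S)$ directly (vertex matching, no consecutive cancellation, no $J$-subwords in $\nu(S)$ or $\nu(S)^{-1}$) is also routine, using that $\tau$ sends $I$ to $I$ and that $\nu$ swaps arrow/inverse type on each letter.
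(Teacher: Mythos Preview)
Your argument is correct. The paper does not give a full proof; it merely states that the lemma is ``an easy consequence of \cite{krau} and the symmetric shapes of the radical series of $P_0$ and $P_1$,'' leaving the reader to match up Krause's combinatorial bases of $\mathrm{Hom}$-spaces under the symmetry $\nu$ and then to check that the count of morphisms factoring through projectives is likewise preserved. Your route is different and more structural: you promote the pictorial symmetry of $(\ref{eq:proj})$ to an honest anti-automorphism $\tau$ of $\Lambda$, and then transport everything via the exact contravariant self-equivalence $D_\tau=\mathrm{Hom}_k(-,k)$ twisted by $\tau$. The identification $D_\tau M(S)\cong M(\nu(S))$ that you carry out on canonical bases is exactly the right computation, and once it is in hand the dimension equalities are automatic (for $\underline{\mathrm{End}}$ because any duality preserves projectives over a self-injective algebra and hence descends to the stable category; for $\underline{\mathrm{Ext}}^1$ because $D_\tau$ is exact and $\underline{\mathrm{Ext}}^1=\mathrm{Ext}^1$ here). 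Compared to the paper's intended argument, your approach avoids invoking the fine structure of \cite{krau} and gives the two equalities simultaneously; the paper's approach, by contrast, stays closer to the explicit string combinatorics used throughout \S\ref{s:stableend}. One small remark: your sentence ``Since $D_\tau M(S)$ is a bona fide $\Lambda$-module, this confirms\ldots that $\nu(S)$ is a valid string'' is a bit quick as stated; it is cleaner to note directly, as you do at the end, that $\nu$ induces a bijection on $J$ via $p\mapsto\nu(p_{\mathrm{rev}})$ (which you have implicitly verified when checking $\tau(I)\subset I$), so the forbidden-subword condition passes from $S$ to $\nu(S)$ immediately.
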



\section{The stable Auslander-Reiten components of $kS_4$ containing $T_0$  and $\Omega(T_0)$}
\label{s:c0}
\setcounter{equation}{0}
\setcounter{figure}{0}

In this section, we prove part (i) of Theorem \ref{thm:supermain}. 
Let $\Lambda=kQ/I$ where $k$, $Q$ and $I$ are as in $(\ref{eq:qi})$, i.e. $\Lambda$ is Morita
equivalent to $kS_4$.

Let $\mathfrak{C}_0$ be the component of the stable Auslander-Reiten quiver of $\Lambda$
containing $S_0$, and let $U$ be a $\Lambda$-module belonging to $\mathfrak{C}_0\cup\Omega(\mathfrak{C}_0)$. 
Using the description of the components of the stable Auslander-Reiten quiver of $\Lambda$ as in 
\S\ref{s:ks4}, we see that $\mathfrak{C}_0$ is of type $\mathbb{Z}A_\infty^\infty$. Moreover, 
using hooks and cohooks (see Definition \ref{def:arcomps}) we obtain the following.
There is an $i\in\mathbb{Z}$ such that $\Omega^i(U)$ is isomorphic to one of the following string modules: 
\begin{equation}
\label{eq:c0mods}
S_0,\quad A_n = M((\alpha\beta^{-1}\gamma^{-1})^n),\quad B_n=M((\alpha^{-1}\gamma\beta)^n),
\quad n\ge 1.
\end{equation}

\begin{lemma}
\label{lem:c0endos}
For $n\ge 1$, let $V_n$ be an indecomposable $kS_4$-module which, under the Morita equivalence,
corresponds to the $\Lambda$-module $A_n$ or $B_n$ from $(\ref{eq:c0mods})$. 
Then $\underline{\mathrm{End}}_{kS_4}(V_n)\cong k\cong \mathrm{Ext}^1_{kS_4}(V_n,V_n)$.
\end{lemma}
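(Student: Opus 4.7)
The plan is first to pass via the Morita equivalence between $kS_4$ and $\Lambda = kQ/I$ to the corresponding $\Lambda$-modules, so the statement becomes $\underline{\mathrm{End}}_\Lambda(M)\cong k\cong \mathrm{Ext}^1_\Lambda(M,M)$ for $M \in \{A_n, B_n\}$. Applying the bijection $\nu$ of Lemma \ref{lem:upsidedown} letter by letter to the string $(\alpha\beta^{-1}\gamma^{-1})^n$ produces (up to $\sim_s$) the string $(\alpha^{-1}\gamma\beta)^n$, so $\nu(A_n)$ and $B_n$ are isomorphic as $\Lambda$-modules. Lemma \ref{lem:upsidedown} therefore reduces the whole problem to the single module $A_n$.

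Next, to compute $\underline{\mathrm{End}}_\Lambda(A_n)$, I would apply Krause's description of string module homomorphisms (Remark \ref{rem:stringhoms}). Every $\varphi\in\mathrm{End}_\Lambda(A_n)$ is a $k$-linear combination of maps $\sigma_C$ indexed by substrings $C$ of $S=(\alpha\beta^{-1}\gamma^{-1})^n$ satisfying the boundary conditions (i) and (ii) of Remark \ref{rem:stringhoms} on both source and target sides. Using the period-$3$ structure of $S$, one enumerates the admissible starting/ending positions $a,b,a',b'$ modulo $3$; aside from the trivial choice $C = S$ (which produces the identity), all admissible $C$ are proper sub-patterns of $S$. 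For each such shorter $C$, I would construct an explicit factorization of $\sigma_C$ through $P_0$ or $P_1$, by observing that the relevant image submodule of $A_n$ embeds into a rotated copy of $P_0$ or $P_1$ via an arrow of $Q$. Hence only the identity survives in the stable quotient, giving $\underline{\mathrm{End}}_\Lambda(A_n)\cong k$.

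For the Ext group I would use self-injectivity of $\Lambda$ to identify $\mathrm{Ext}^1_\Lambda(A_n, A_n) \cong \underline{\mathrm{Hom}}_\Lambda(\Omega A_n, A_n)$. Since the top of $A_n$ is $S_0^{\,n}$, the projective cover is $P_0^{\,n}\twoheadrightarrow A_n$; computing its kernel explicitly identifies $\Omega(A_n)$ as a specific string module obtained by gluing the unused halves of the $n$ copies of $P_0$ (for $n=1$ one finds $\Omega(A_1)\cong M(\gamma)$). Remark \ref{rem:stringhoms} is then applied once more to enumerate the homomorphisms $\Omega(A_n)\to A_n$, and a parallel projective-factorization analysis shows that exactly one class survives.

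The main obstacle will be the combinatorial enumeration of the admissible substrings of $(\alpha\beta^{-1}\gamma^{-1})^n$ and the verification that every non-identity $\sigma_C$ factors through a projective. The tedium grows with $n$, but the period-$3$ structure of $S$ should allow the factorizations through $P_0$ and $P_1$ to be produced uniformly across $n$, organized by the offset mod $3$ of the substring within the repeating block $\alpha\beta^{-1}\gamma^{-1}$.
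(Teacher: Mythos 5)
Your outline is mathematically sound, but it takes a genuinely different route from the paper. The paper does not compute with Krause's hom-bases at all for this lemma: it proves the statement for the modules $B_n$ (and gets $A_n$ via Lemma \ref{lem:upsidedown}, i.e.\ the same $\nu$-reduction you use, just in the opposite direction) by showing that the corresponding $kS_4$-modules $V_n$ are \emph{endo-trivial}. Starting from the almost split sequence ending in $T_0$, whose middle term is $V_1\oplus W_1$ with $\Omega^{-1}(V_1)\oplus\Omega^{-1}(W_1)$ the heart of $P_{T_0}$, it invokes Auslander--Carlson (Lemma 5.4, Thm.\ 3.6, Cor.\ 4.7 of \cite{AC}) and tensors the sequence with odd-dimensional modules to propagate endo-triviality inductively along the component; endo-triviality then gives $\underline{\mathrm{End}}_{kS_4}(V_n)\cong k$ at once and, crucially, $\mathrm{Ext}^1_{kS_4}(V_n,V_n)\cong \HH^1(S_4,\mathrm{End}_k(V_n))\cong \HH^1(S_4,T_0)\cong k$ without any syzygy or hom-space enumeration. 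What your approach buys is independence from the group-theoretic input: everything happens inside the special biserial algebra $\Lambda$, in the same canonical-basis style the paper itself uses in \S\ref{s:c1}--\S\ref{s:stableend}, and your consistency checks are correct (e.g.\ $\nu(A_n)\cong B_n$, top of $A_n$ equal to $S_0^n$, $\Omega(A_1)\cong M(\gamma)$, and $\mathrm{Ext}^1_\Lambda(A_n,A_n)\cong\underline{\mathrm{Hom}}_\Lambda(\Omega(A_n),A_n)$ by self-injectivity). What it costs is that the entire burden now sits in the step you only sketch: you must show that \emph{every} non-identity graph map $\sigma_C$ (and every non-trivial stable class of maps $\Omega(A_n)\to A_n$ except one) factors through a projective, uniformly in $n$. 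Note this is the harder direction of the technique: where the paper uses canonical bases it only ever needs to \emph{exhibit one} map that does not factor through a projective (Lemmas \ref{lem:c1endos}, \ref{lem:3tubeudr}, Props.\ \ref{prop:endos}, \ref{prop:bands}), whereas you need a complete enumeration of admissible substrings $C$ together with explicit liftings through $P_0$ or $P_1$; the claims you would be verifying are true (indeed, since $\mathrm{End}_\Lambda(A_n)$ is local and $A_n$ is not projective, stable triviality of each non-identity $\sigma_C$ is forced once the lemma holds), and the period-$3$ structure does make the check finite and uniform, but as written this part is a plan rather than a proof and is exactly where the paper's endo-triviality argument saves the work.
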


\begin{proof}
We prove this for $V_n$ corresponding to the $\Lambda$-module $B_n$.
The case when $V_n$ corresponds to the $\Lambda$-module $A_n$ follows then from Lemma 
\ref{lem:upsidedown}.

We first show how the results in \cite{AC} imply
that $V_n$ is endo-trivial, in the sense that the $kS_4$-module 
$\mathrm{End}_k(V_n)$ is isomorphic to a direct sum of the trivial simple $kS_4$-module
$T_0=k$ and a projective $kS_4$-module. Note that if $M$ and $N$ are endo-trivial $kS_4$-modules,
then so are $M\otimes_k N$ and $\Omega^i(M)$ for all $i\in\mathbb{Z}$. 
Consider the almost split sequence ending in $T_0$
\begin{equation}
\label{eq:oh1}
\xymatrix{0\ar[r]& \Omega^2(T_0) \ar[r]& V_1\oplus W_1 \ar[r]& T_0\ar[r]& 0}
\end{equation}
where, by our assumptions, $V_1$ corresponds to the $\Lambda$-module $B_1$, and $W_1$ 
corresponds to the $\Lambda$-module $\Omega^2(A_1)$. 
Since $\Omega^{-1}(V_1)\oplus \Omega^{-1}(W_1)$ is isomorphic to the heart
$\mathrm{rad}(P_{T_0})/\mathrm{soc}(P_{T_0})$
of the projective indecomposable $kS_4$-module $P_{T_0}$ with top $T_0$, it follows from \cite[Lemma 5.4]{AC} that
$V_1$ and $W_1$ are endo-trivial $kS_4$-modules. 
Given an indecomposable $kS_4$-module $M$ of odd $k$-dimension, it follows from
\cite[Thm. 3.6 and Cor. 4.7]{AC} that 
\begin{equation}
\label{eq:oh2}
\xymatrix{0\ar[r]& \Omega^2(T_0)\otimes_k M\ar[r]&  (V_1\otimes_k M)\oplus (W_1\otimes_k M) \ar[r] 
&M\ar[r]& 0}
\end{equation}
is the almost split sequence ending in $M$ modulo projective direct summands.
Applying this inductively to $M=V_n$ and using that $V_{n+1}$ is a direct
summand of the middle term of the almost split sequence ending in $V_n$, we obtain that 
$V_n$ is endo-trivial for all $n\ge 1$. 

Since $V_n$ is endo-trivial, it follows that $\underline{\mathrm{End}}_{kS_4}(V_n)\cong k$ and that
$$\mathrm{Ext}_{kS_4}^1(V_n,V_n)\cong 
\HH^1(S_4,\mathrm{End}_k(V_n))
\cong \HH^1(S_4,T_0)\cong \mathrm{Ext}^1_{kS_4}(T_0,T_0)\cong k.$$
\end{proof}

\begin{lemma}
\label{lem:c0udr}
For $n\ge 1$, let $V_n$ be an indecomposable $kS_4$-module which, under the Morita equivalence,
corresponds to the $\Lambda$-module $A_n$ or $B_n$ from $(\ref{eq:c0mods})$. 
Then $R(S_4,V_n)$ is isomorphic to the group ring $W[\mathbb{Z}/2]$.
\end{lemma}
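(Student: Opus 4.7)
By Lemma \ref{lem:c0endos}, $\mathrm{Ext}^1_{kS_4}(V_n,V_n)\cong k$, so the tangent space of $F_{V_n}$ is one-dimensional and $R(S_4,V_n)$ is a quotient of $W[[t]]$; the target is $W[\mathbb{Z}/2]\cong W[t]/(t^2+2t)$. The plan is to use the endo-trivial decomposition $\mathrm{End}_k(V_n)\cong T_0\oplus(\text{projective})$ from Lemma \ref{lem:c0endos} to transfer the computation to the trivial module $T_0$. Since $S_4^{\mathrm{ab}}\cong\mathbb{Z}/2$ via the sign character and $R^\times$ is abelian for every $R\in\mathcal{C}$, a rank-one lift of $T_0$ over $R$ is a character $S_4\to R^\times$ reducing to the trivial character, which must factor through $S_4^{\mathrm{ab}}$; this identifies $F_{T_0}$ for $S_4$ with the deformation functor of the trivial module for $\mathbb{Z}/2$, and a direct computation gives $R(S_4,T_0)\cong R(\mathbb{Z}/2,k)\cong W[t]/(t^2+2t)\cong W[\mathbb{Z}/2]$.

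Next I would construct a $W$-lift $\tilde V_n$ of $V_n$ by induction on $n$ using Lemma \ref{lem:Wlift}. The base case applies Lemma \ref{lem:Wlift} to the almost split sequence $(\ref{eq:oh1})$, starting from the $W$-lifts of $T_0$ and $\Omega^2(T_0)$ coming from the standard projective resolution of $T_0$ over $W$. The inductive step applies Lemma \ref{lem:Wlift} to $(\ref{eq:oh2})$ with $M=V_n$, producing a lift of $V_{n+1}$ as a direct summand of the lifted middle term. Twisting $\tilde V_n$ by the sign character $\chi$ then produces a second $W$-lift $\tilde V_n\otimes_W W_\chi$; the characters of $F\otimes_W\tilde V_n$ and $F\otimes_W(\tilde V_n\otimes W_\chi)$ are distinct (endo-triviality forces the character of $\tilde V_n$ to be non-invariant under $\chi$). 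This provides two distinct morphisms $R(S_4,V_n)\to W$, which together with the one-dimensional tangent space force a surjection $R(S_4,V_n)\twoheadrightarrow W[\mathbb{Z}/2]$.

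Finally, to upgrade this surjection to an isomorphism, I would exhibit mutually inverse natural transformations $F_{V_n}\leftrightarrow F_{T_0}$. Given $R\in\mathcal{C}$ and a rank-one lift $L$ of $T_0$ over $R$, the $RS_4$-module $\tilde V_n\otimes_W L$ with diagonal action is a lift of $V_n$ over $R$. Conversely, given a lift $M$ of $V_n$ over $R$, the $RS_4$-module $\mathrm{Hom}_R(\tilde V_n\otimes_W R,M)$ reduces modulo $\mathfrak{m}_R$ to $\mathrm{End}_{kS_4}(V_n)\cong T_0\oplus(\text{projective})$; an idempotent-lifting argument, valid because $R$ is complete local with residue field $k$, extracts a rank-one free $R$-direct summand that lifts $T_0$. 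These constructions exhibit $F_{V_n}$ as an $F_{T_0}$-torsor, yielding $R(S_4,V_n)\cong R(S_4,T_0)\cong W[\mathbb{Z}/2]$. The main obstacle is the uniform well-definedness and mutual inverseness of this torsor structure across all $R\in\mathcal{C}$, which requires a careful idempotent-lifting argument inside the non-commutative endomorphism ring $\mathrm{End}_{RS_4}(\tilde V_n\otimes_W R)$.
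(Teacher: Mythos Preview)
Your overall strategy is genuinely different from the paper's and, in its core idea, valid. The paper never uses the torsor/tensor argument from endo-triviality; instead it restricts $V_n$ to a cyclic subgroup $C=\langle h\rangle$ of order two (with $h$ a transposition chosen so that $\alpha=(1+h)e_0$), computes $\mathrm{Res}^{S_4}_C V_n\cong k\oplus (kC)^{2n}$, and obtains from universality a map $f_n:W[\mathbb{Z}/2]\to R(S_4,V_n)$ in the \emph{opposite} direction to yours. Surjectivity of $f_n$ is checked on $k[\epsilon]$-points via an explicit extension $(\ref{eq:ses})$, and injectivity comes from producing two distinct $W$-lifts whose $F$-characters are computed inductively from the decomposition matrix, so that $\mathrm{Spec}\,R(S_4,V_n)$ contains both generic points of $\mathrm{Spec}\,W[\mathbb{Z}/2]$. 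Your approach, by contrast, packages the whole computation into an equivalence $F_{V_n}\simeq F_{T_0}$; this is more conceptual and avoids any character bookkeeping, at the cost of the idempotent-lifting/Krull--Schmidt argument you flag.

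There are, however, real gaps in your middle paragraph. First, applying Lemma~\ref{lem:Wlift} to the almost split sequence $(\ref{eq:oh1})$ (and to $(\ref{eq:oh2})$) requires verifying both $\mathrm{Ext}^1_{kS_4}(T_0,\Omega^2 T_0)\cong k$ and the numerical condition $\dim_F\mathrm{Hom}_{FS_4}(F\otimes X_1,F\otimes X_2)=\dim_k\mathrm{Hom}_{kS_4}(T_0,\Omega^2 T_0)-1$ for specific lifts $X_1,X_2$; you assert neither. Second, even if Lemma~\ref{lem:Wlift} applies, it yields a lift of the \emph{middle term} $V_1\oplus W_1$ (resp.\ of $(V_1\otimes V_n)\oplus(W_1\otimes V_n)$), and extracting a $W$-lift of $V_{n+1}$ as a summand is an additional step. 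The paper sidesteps this by using the simpler filtration $0\to V_{n-1}\to V_n\to M\to 0$ with $M$ corresponding to $M(\gamma\beta)$, where both hypotheses of Lemma~\ref{lem:Wlift} are checked directly from the decomposition matrix. Finally, your claim in the second paragraph that two distinct $W$-points plus a one-dimensional tangent space ``force a surjection $R(S_4,V_n)\twoheadrightarrow W[\mathbb{Z}/2]$'' is not correct as stated: two $W$-points of a quotient of $W[[t]]$ only give a surjection onto $W[t]/(t(t-a))$ for some nonzero $a\in 2W$, and $a$ need not be a unit times $2$. This issue disappears once your torsor argument is in place, since the natural transformation $L\mapsto (\tilde V_n\otimes_W R)\otimes_R L$ already induces a surjection $R(S_4,V_n)\to R(S_4,T_0)=W[\mathbb{Z}/2]$ (it is nonzero on tangent spaces by endo-triviality), so you should simply drop the intermediate claim.
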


\begin{proof}
We prove this for $V_n$ corresponding to the $\Lambda$-module $A_n$, the case
of $V_n$ corresponding to  $B_n$ being similar.

We use the description in \cite[Lemma V.2.5]{erd} of the Morita equivalence between $kS_4$ and 
$\Lambda=kQ/I$ with $Q$ and $I$ as in $(\ref{eq:qi})$. From this it follows that we can take
the arrow $\alpha$ to be $(1+h)e_0$ where $h\in S_4$ is a transposition and
$e_0$ is the idempotent corresponding to the
projective indecomposable $kS_4$-module $P_{T_0}$ with top $T_0$.
Let $C$ be the cyclic
subgroup of $S_4$ of order two generated by $h$. Let $T_{00}$ be the uniserial 
$kS_4$-module with radical series $T_0,T_0$. Because of our choice of $h$, it follows that
restricting the action of $S_4$ on $T_{00}$ to $C$ results in a $2$-dimensional $kC$-module with
non-trivial action. Considering a representation of the simple $kS_4$-module $T_1$,
it follows that  restricting the action of $S_4$ on $T_1$ to $C$ also results in
a $2$-dimensional $kC$-module with non-trivial action. Hence
\begin{equation}
\label{eq:restrict}
\mathrm{Res}^{S_4}_C\,T_{00}\cong kC\cong \mathrm{Res}^{S_4}_C\,T_1.
\end{equation}

Let $A_0=S_0$. For $n\ge 1$, we have a short exact sequence of $\Lambda$-modules
$$\xymatrix{
0\ar[r]& A_{n-1} \ar[r] & A_n \ar[r] & Y \ar[r]& 0}$$
where $Y=M(\alpha\beta^{-1})$. If $Z$ is the $kS_4$-module
corresponding to $Y$, then $(\ref{eq:restrict})$ implies that 
$$\mathrm{Res}^{S_4}_C\,Z\cong kC\oplus kC.$$
Hence it follows by induction that 
$$\mathrm{Res}^{S_4}_C\,V_n\cong k\oplus (kC)^{2n}$$
where  $k$ stands for the trivial simple $kC$-module. 
Therefore, $\mathrm{Res}^{S_4}_C\,V_n$ is a $kC$-module with stable endomorphism ring $k$
which has  universal deformation ring $R(C,\mathrm{Res}^{S_4}_{C}V_n)\cong W[\mathbb{Z}/2]$.
Let $(U_{V_n,C},\phi_{n,C})$  be a universal lift of $\mathrm{Res}^{S_4}_{C}V_n$ over 
$W[\mathbb{Z}/2]$,
and let $(U_{V_n},\phi_n)$ be a universal lift of $V_n$ over $R(S_4,V_n)$. 
Then there exists a unique $W$-algebra homomorphism $f_n:W[\mathbb{Z}/2]\to R(S_4,V_n)$ in 
$\mathcal{C}$  such that the lift $(\mathrm{Res}^{S_4}_{C}\,U_{V_n},\mathrm{Res}^{S_4}_{C}\,\phi_n)$ 
is isomorphic to the lift $(R(S_4,V_n)\otimes_{W[\mathbb{Z}/2],f_n} U_{V_n,C},\,\phi_{n,C})$. 

We next show that $f_n$ is surjective for all $n\ge 1$. For this,
we consider all morphisms $\rho:R(S_4,V_n)\to k[\epsilon]/(\epsilon^2)$.
There is a non-split short exact sequence of $kS_4$-modules
\begin{equation}
\label{eq:ses}
\xymatrix{
0\ar[r] & V_n \ar[r] & (P_{T_0})^n \oplus T_{00}\ar[r] & V_n\ar[r] &0}.
\end{equation}
Hence $E_n=(P_{T_0})^n \oplus T_{00}$ defines a non-trivial lift of 
$V_n$ over $k[\epsilon]/(\epsilon^2)$. Since
$$\mathrm{Res}^{S_4}_C\,E_n\cong (kC)^{4n}\oplus kC,$$
$\mathrm{Res}^{S_4}_C\,E_n$ defines a non-trivial lift of $\mathrm{Res}^{S_4}_C\,V_n$
over $k[\epsilon]/(\epsilon^2)$.
Because $\mathrm{Ext}^1_{kS_4}(V_n,V_n)\cong k$ by Lemma \ref{lem:c0endos}, this implies
that as $\rho$ runs through the morphisms 
$R(S_4,V_n)\to k[\epsilon]/(\epsilon^2)$, the composition $\rho\circ f_n$ runs through the morphisms
$W[\mathbb{Z}/2]\to k[\epsilon]/(\epsilon^2)$. Hence $f_n$ is surjective.

We now use Lemma \ref{lem:Wlift} to prove that $V_n$ has always two non-isomorphic lifts over $W$. 
We first show that the $kS_4$-module $M$ corresponding to the $\Lambda$-string module
$M(\gamma\beta)$ has two non-isomorphic lifts over $W$. For this we consider
the representation $\varphi:S_4\to \mathrm{Mat}_4(W)$ which is the natural permutation 
representation of $S_4$, i.e. $\varphi$ is given by
$$\varphi((1,2))=\left(\begin{array}{cccc}0&1&0&0\\1&0&0&0\\0&0&1&0\\0&0&0&1\end{array}\right),
\quad\varphi((1,2,3,4))=\left(\begin{array}{cccc}0&0&0&1\\1&0&0&0\\0&1&0&0\\0&0&1&0
\end{array}\right).$$
Reducing these matrices modulo $2$, results in a representation $\overline{\varphi}:S_4\to \mathrm{Mat}_4(k)$ which is
the representation of a $4$-dimensional indecomposable $kS_4$-module corresponding
to the $\Lambda$-string module $M(\gamma\beta)$. We can order the ordinary
irreducible characters of $S_4$  in $(\ref{eq:decoms4})$ such that $\chi_1$ is the trivial character,
$\chi_2$ is the sign character and the $F$-character of $\varphi$ is equal to $\chi_1+\chi_3$.
Twisting $\varphi$ with the sign character $\chi_2$, we obtain a new representation 
$\varphi':S_4\to \mathrm{Mat}_4(W)$
with $F$-character $\chi_2+\chi_4$ which modulo $2$ defines a representation 
of $S_4$ over $k$ that is also equivalent to
$\overline{\varphi}$. This means that we get two non-isomorphic lifts $(X,\phi_X)$ and $(X',\phi_{X'})$
of $M$ over $W$ such that
the $F$-character of $X$ is  $\chi_1+\chi_3$ and the $F$-character of $X'$ is
$\chi_2+\chi_4$.

Let $V_0=T_0$. Then it follows from the decomposition matrix 
(\ref{eq:decoms4}) that there are two non-isomorphic $WS_4$-modules $U_{0,1}^W$ and 
$U_{0,2}^W$ which define lifts of $V_0$ over $W$ and whose $F$-characters are given by $\chi_1$ 
and $\chi_2$, respectively. 

Let $n\ge 1$. Then
$\mathrm{Hom}_{kS_4}(M,V_{n-1})\cong k^{n}$,
$\mathrm{Ext}^1_{kS_4}(M,V_{n-1})\cong k$, and
we have a short exact sequence of $kS_4$-modules
\begin{equation}
\label{eq:anotherses1}
\xymatrix{
0\ar[r]& V_{n-1} \ar[r] & V_n \ar[r] & M\ar[r]& 0}.
\end{equation}
Assume by induction that there are two non-isomorphic $WS_4$-modules
$U_{n-1,1}^W,$ and $U_{n-1,2}^W$ which define lifts of $V_{n-1}$ over $W$ such that
the $F$-character of  $U_{n-1,1}^W$ is
\begin{equation}
\label{eq:char2}
\left\{\begin{array}{ll} 
\chi_1+s(\chi_1+\chi_3) + s(\chi_2+\chi_4) & \mbox{ if }n-1=2s\\
\chi_1+ s(\chi_1+\chi_3) + (s+1)(\chi_2+\chi_4)& \mbox{ if }n-1=2s+1
\end{array}\right\}
\end{equation}
and the $F$-character of $U_{n-1,2}^W$ is 
\begin{equation}
\label{eq:char1}
\left\{\begin{array}{ll} 
\chi_2+s(\chi_1+\chi_3) + s(\chi_2+\chi_4) & \mbox{ if }n-1=2s\\
\chi_2+ (s+1)(\chi_1+\chi_3) + s(\chi_2+\chi_4)& \mbox{ if }n-1=2s+1
\end{array}\right\}.
\end{equation}
Let $X^F=F\otimes_WX$, ${X'}^F=F\otimes_WX'$, $U_{n-1,1}^F=F\otimes_W U_{n-1,1}^W$,
and $U_{n-1,2}^F=F\otimes_W U_{n-1,2}^W$.
Then if $n-1=2s$, 
$$\mathrm{dim}_F\,\mathrm{Hom}_{FS_4}({X'}^F,U_{n-1,1}^F)=s+s=n-1,$$
and if $n-1=2s+1$,
$$\mathrm{dim}_F\,\mathrm{Hom}_{FS_4}(X^F,U_{n-1,1}^F)=1+s+s=n-1.$$
Similarly, if $n-1=2s$ then 
$\mathrm{dim}_F\,\mathrm{Hom}_{FS_4}(X^F,U_{n-1,2}^F)=n-1$, 
and if $n-1=2s+1$ then
$\mathrm{dim}_F\,\mathrm{Hom}_{FS_4}({X'}^F,U_{n-1,2}^F)=n-1$.
Hence by Lemma \ref{lem:Wlift}, there are two non-isomorphic $WS_4$-modules $U_{n,1}^W$ and 
$U_{n,2}^W$ which define lifts of $V_n$ over $W$ and
whose $F$-characters are as in $(\ref{eq:char1})$ and $(\ref{eq:char2})$ when $n-1$
is replaced by $n$. 

Summarizing, we see that for each $n\ge 1$, we have a surjective morphism
$f_n:W[\mathbb{Z}/2]\to R(S_4,V_n)$ in $\mathcal{C}$ and there are two distinct morphisms
$R(S_4,V_n)\to W$ in $\mathcal{C}$.
Hence $\mathrm{Spec}(R(S_4,V_n))$ contains both points of the generic 
fiber of $\mathrm{Spec}(W[\mathbb{Z}/2])$.
Since the Zariski closure of these points is all of $\mathrm{Spec}(W[\mathbb{Z}/2])$,
this implies that $R(S_4,V_n)$ must be isomorphic to $W[\mathbb{Z}/2]$.
This completes the proof of Lemma \ref{lem:c0udr}.
\end{proof}

Since by \cite[\S 1.4]{maz1}, $R(S_4,T_0)\cong W[\mathbb{Z}/2]$,
part (i) of Theorem \ref{thm:supermain}  now follows from Lemmas \ref{lem:c0endos}, \ref{lem:c0udr} and
\ref{lem:defhelp}.


\section{The stable Auslander-Reiten component of $kS_4$ containing $T_1$}
\label{s:c1}
\setcounter{equation}{0}
\setcounter{figure}{0}

In this section, we prove part (ii) of Theorem \ref{thm:supermain}. 
As before, let $\Lambda=kQ/I$ where $k$, $Q$ and $I$ are as in $(\ref{eq:qi})$.

Let $\mathfrak{C}_1$ be the component of the stable Auslander-Reiten quiver of $\Lambda$
containing $S_1$. Using the description of the components of the stable Auslander-Reiten quiver 
of $\Lambda$ as in  \S\ref{s:ks4}, we see that $\mathfrak{C}_1$ is of type 
$\mathbb{Z}A_\infty^\infty$. Moreover near $S_1$, $\mathfrak{C}_1$ looks as in Figure
\ref{fig:c1}.
\begin{figure}[ht] \hrule \caption{\label{fig:c1} The stable Auslander-Reiten component 
$\mathfrak{C}_1$ near $S_1$.}
$$\xymatrix @-1.2pc{
&&&&&&\\
&&&S_{00}\ar[rd]&&&\\
&&\Omega(S_1)\ar[rd]\ar[ru]\ar@{.}[lu]&&\Omega^{-1}(S_1)\ar[rd]\ar@{.}[ru]&&\\
&\Omega^2(S_1)\ar[ru]\ar[rd]&&S_1\ar[ru]\ar[rd]\ar@{.}[ld]&&\Omega^{-2}(S_1)&\\
&&\Omega(S_{00})\ar[ru]\ar[rd]\ar@{.}[ld]&&\Omega^{-1}(S_{00})\ar[ru]\ar@{.}[rd]&&\\
&&&\Omega(S_{0011})\ar[ru]&&&\\&&&&&&
}$$
\hrule
\end{figure}
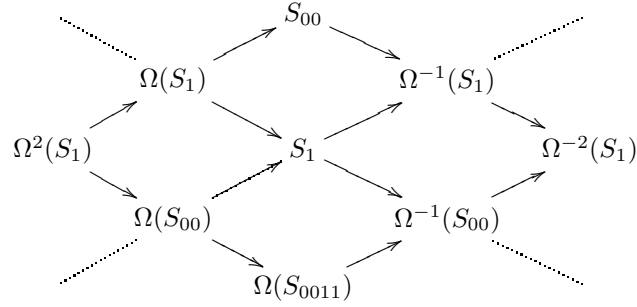
where $S_{00}$ is the uniserial $\Lambda$-module with radical series $S_0,S_0$ and 
$S_{0011}$ is the string module $M(\alpha^{-1}\gamma\eta^{-1})$. 
Hence $\mathfrak{C}_1=\Omega(\mathfrak{C}_1)$.

\begin{lemma}
\label{lem:c1endos}
The only $\Lambda$-modules belonging to $\mathfrak{C}_1$ which have stable endomorphism ring 
$k$ are of the form $\Omega^i(S_1)$ for some $i\in\mathbb{Z}$.
\end{lemma}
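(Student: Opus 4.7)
The plan is to exploit the fact that $\mathfrak{C}_1$ is of type $\mathbb{Z}A_\infty^\infty$ with $\mathfrak{C}_1=\Omega(\mathfrak{C}_1)$, so that by Lemma \ref{lem:defhelp}(i) the stable endomorphism ring is constant along each $\Omega$-orbit. It therefore suffices to check one representative from each $\Omega$-orbit in $\mathfrak{C}_1$ other than that of $S_1$, and to exhibit, for each such representative, a non-identity endomorphism that does not factor through a projective $\Lambda$-module. Using the description of $\mathfrak{C}_1$ near $S_1$ in Figure \ref{fig:c1}, together with the hook/cohook description of irreducible morphisms in Definition \ref{def:arcomps}, I would enumerate these $\Omega$-orbits by walking outward from the $\Omega$-orbit of $S_1$. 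The first two new orbits are represented by $S_{00}=M(\alpha)$ and by $\Omega(S_{0011})=\Omega(M(\alpha^{-1}\gamma\eta^{-1}))$ respectively, and all further $\Omega$-orbits are reached by iteratively adding hooks and cohooks.

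For the representative $S_{00}=M(\alpha)$, the non-identity endomorphism is the map $\sigma$ sending the top copy of $S_0$ onto the socle copy, which in the language of Remark \ref{rem:stringhoms} corresponds to taking $C=1_0$ and observing that the empty string at vertex $0$ sits inside $\alpha$ in two essentially different ways. To check that $\sigma$ is nonzero in the stable category, which, because $\Lambda$ is self-injective, is equivalent to $\sigma$ not factoring through the projective cover $P_0\twoheadrightarrow S_{00}$, one verifies that any candidate lift $g:S_{00}\to P_0$ of the top generator $z_1$ must, because $\beta z_1=0$ in $S_{00}$, take values in the subspace of vertex-$0$ elements of $P_0$ annihilated by $\beta$; the relations in $(\ref{eq:qi})$ and the biserial structure in $(\ref{eq:proj})$ force this subspace to be contained in $\mathrm{rad}^2(P_0)$, so that composing $g$ with the projective cover yields zero on $z_1$, contradicting $\sigma(z_1)\neq 0$. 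An analogous computation handles $\Omega(S_{0011})$.

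For $\Omega$-orbit representatives further from that of $S_1$ in $\mathfrak{C}_1$, the same principle applies: any string $S$ obtained from $1_1$ by a nontrivial sequence of hook/cohook operations necessarily contains a proper substring $C$ that embeds into $S$ in two genuinely different ways, so Remark \ref{rem:stringhoms} supplies a non-identity endomorphism $\sigma_C$. The main obstacle is uniformly verifying that $\sigma_C$ is nonzero in the stable category as we move to modules of larger quasi-length. My plan for this is induction on the quasi-length distance from the $\Omega$-orbit of $S_1$: the base cases $S_{00}$ and $\Omega(S_{0011})$ are handled as above, and the inductive step propagates non-factorization along the irreducible morphisms in the AR quiver using the explicit combinatorial description in Remark \ref{rem:stringhoms} together with the symmetry $\nu$ of Lemma \ref{lem:upsidedown}, which fixes $\alpha$ and $\eta$, swaps $\beta$ and $\gamma$, and halves the number of base cases to check.
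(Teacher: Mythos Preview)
Your setup is the same as the paper's: reduce to one representative per $\Omega$-orbit and exhibit a nonzero stable endomorphism that is not a scalar. Your treatment of $S_{00}=M(\alpha)$ is correct.

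The gap is the inductive step. ``Propagating non-factorization along the irreducible morphisms'' is not a technique: the dimension of $\underline{\mathrm{End}}_\Lambda(-)$ is \emph{not} monotone along Auslander--Reiten sequences (indeed, in the neighbouring component $\mathfrak{C}_0$ every module has stable endomorphism ring $k$, by Lemma~\ref{lem:c0endos}, so nothing nontrivial can be ``propagated'' there). You would have to say, for each new string $S$, precisely which substring $C$ you choose and why the resulting $\sigma_C$ survives in the stable category; the vague assertion that some $C$ ``embeds in two genuinely different ways'' does not match the quotient/submodule conditions (i) and (ii) of Remark~\ref{rem:stringhoms}, and the appeal to $\nu$ from Lemma~\ref{lem:upsidedown} does not supply this either.

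The paper sidesteps the induction entirely. Using hooks and cohooks it writes down, once and for all, a closed-form representative for every $\Omega$-orbit in $\mathfrak{C}_1$ other than that of $S_1$: three one-parameter families $C_{1,n}$, $C_{2,n}$, $C_{3,n}$ of string modules. The point is that every one of these strings begins with $\alpha^{-1}$, so in the canonical basis the map $x_0\mapsto x_1$, $x_r\mapsto 0$ for $r\ge 1$, is a $\Lambda$-endomorphism factoring through $S_0$; exactly the argument you gave for $S_{00}$ then shows uniformly that it does not factor through a projective. Replacing your induction by this explicit parametrisation closes the gap immediately.
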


\begin{proof}
Let $U$ be a $\Lambda$-module belonging to $\mathfrak{C}_1$ which is not in the
$\Omega$-orbit of $S_1$.
Using  hooks and cohooks (see Definition \ref{def:arcomps}) we obtain the following because of
Figure \ref{fig:c1}.
There exist integers $i$ and $n\ge 1$ such that $\Omega^i(U)$ is isomorphic to one of the
following string modules: 
$$\begin{array}{c@{\;=\;}l}
C_{1,n}&M\left(\alpha^{-1}\left(\gamma\eta^{-1}\beta\alpha^{-1}\beta^{-1}\eta\gamma^{-1}\alpha^{-1}
\right)^{n-1}\right),\\[2ex]
C_{2,n}&M\left(\alpha^{-1}\left(\gamma\eta^{-1}\beta\alpha^{-1}\beta^{-1}\eta\gamma^{-1}\alpha^{-1}
\right)^{n-1}\gamma\eta^{-1}\right),\\[2ex]
C_{3,n} & M\left(\alpha^{-1}\left(\gamma\eta^{-1}\beta\alpha^{-1}\beta^{-1}\eta\gamma^{-1}\alpha^{-1}
\right)^{n-1}\gamma\eta^{-1}\beta\alpha^{-1}\beta^{-1}\right).
\end{array}$$
Let $\{x_r^{\ell,n}\}_{r=0}^{n_\ell}$ be the canonical $k$-basis for $C_{\ell,n}$, $\ell\in\{1,2,3\}$.
Then the $\Lambda$-module endomorphism of $C_{\ell,n}$ which sends $x_0^{\ell,n}$ to $x_1^{\ell,n}$
and all other $x_r^{\ell,n}$ to zero does not factor through a projective $\Lambda$-module. 
\end{proof}

Since it was proved in \cite[Thm. 2.3]{bc5} that $R(S_4,T_1)\cong W[t]/(t^2,2t)$,
part (ii) of Theorem \ref{thm:supermain}  now follows from Lemmas \ref{lem:c1endos} and
\ref{lem:defhelp}.


\section{The stable Auslander-Reiten component of $kS_4$  which is a $3$-tube}
\label{s:3tube}
\setcounter{equation}{0}
\setcounter{figure}{0}

In this section, we prove part (iii) of Theorem \ref{thm:supermain}. 
As before, let $\Lambda=kQ/I$ where $k$, $Q$ and $I$ are as in $(\ref{eq:qi})$.

Let $\mathfrak{C}$ be a component of the stable Auslander-Reiten quiver of $\Lambda$
containing a uniserial module $X$ of length $3$ with non-isomorphic top and socle.
Then $X$ is either $X_1$ with descending radical series $S_0,S_0,S_1$ or
$X_2$ with descending radical series $S_1,S_0,S_0$. In particular, $X_2=\Omega(X_1)=
\Omega^{-2}(X_1)$, which implies that $\mathfrak{C}$ is the unique $3$-tube of the
stable Auslander-Reiten quiver of $\Lambda$ with $X_1$ and $X_2$
belonging to its boundary, and $\mathfrak{C}=\Omega(\mathfrak{C})$. The third $\Lambda$-module
at the boundary of $\mathfrak{C}$ is $\Omega^2(X_1)$ which is the uniserial $\Lambda$-module 
with radical series $S_1,S_1$.

\begin{lemma}
\label{lem:3tubeudr}
The only $\Lambda$-modules belonging to $\mathfrak{C}$ which have stable endomorphism ring $k$
are $X_1$, $X_2=\Omega(X_1)$ and $\Omega^2(X_1)$. Moreover, if  $Y$ is a uniserial
$kS_4$-module of length $3$ with non-isomorphic top and socle and
$V\in\{Y,\Omega^2(Y),\Omega^4(Y)\}$, then $R(S_4,V)\cong k$.
\end{lemma}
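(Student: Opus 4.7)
The lemma has two assertions. First, to identify the $\Lambda$-modules in the $3$-tube $\mathfrak{C}$ with stable endomorphism ring $k$. Second, to compute the universal deformation ring of the corresponding $kS_4$-modules.

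For the first assertion, the plan is to exploit the known structure of the $3$-tube. The boundary of $\mathfrak{C}$ consists of $X_1$, $X_2 = \Omega(X_1)$, and $\Omega^2(X_1) = M(\eta)$ (from Section~\ref{s:ks4}). I would verify each of these has $\underline{\mathrm{End}}_\Lambda = k$ by direct computation on the canonical bases: any non-identity endomorphism sends the top basis element to a radical element, and the resulting map is shown to factor through the relevant projective indecomposable ($P_0$ for $X_1$ and $X_2$; $P_1$ for $M(\eta)$). For non-boundary modules $M(S)$ in $\mathfrak{C}$, the defining string $S$ is obtained by adjoining at least one hook or cohook to a boundary string, so $S$ contains a boundary subword in more than one essential position. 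Using Remark~\ref{rem:stringhoms}, I would construct a non-zero endomorphism $\sigma_C\colon M(S)\to M(S)$ for a suitable common subword $C$. Because $P_0$ and $P_1$ are non-uniserial, such $\sigma_C$ cannot factor through a projective, giving $\underline{\mathrm{End}}_\Lambda(M(S))\neq k$.

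For the second assertion, by Lemma~\ref{lem:defhelp}(i) applied twice, $R(S_4,Y)\cong R(S_4,\Omega^2(Y))$. Up to Morita equivalence, $\Omega^2(Y)$ corresponds to $M(\eta)$; let $Z$ be the corresponding $kS_4$-module, which is uniserial of length $2$ with both composition factors equal to $T_1$, of $k$-dimension $4$. It then suffices to show $R(S_4,Z)\cong k$. The first step would be to verify $\mathrm{Ext}^1_{kS_4}(Z,Z)=0$ by applying $\mathrm{Hom}_\Lambda(-,M(\eta))$ to the projective cover sequence $0\to X_1\to P_1\to M(\eta)\to 0$. A dimension count using $\mathrm{Hom}_\Lambda(X_1,M(\eta))=0$ (since $X_1$ has top $S_0$ and $M(\eta)$ has no element at vertex $0$), $\dim_k\mathrm{Hom}_\Lambda(P_1,M(\eta))=\dim_k e_1 M(\eta)=2$, and $\dim_k\mathrm{End}_\Lambda(M(\eta))=2$ will then give $\mathrm{Ext}^1=0$. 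Consequently $R(S_4,Z)$ is a quotient of $W$.

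The final step would be to show that $Z$ admits no lift over $W$, forcing $R(S_4,Z)\cong k$. The Brauer character of $Z$ equals $2\varphi_1$, and by the decomposition matrix~\eqref{eq:decoms4} the unique ordinary character of $S_4$ of dimension $4$ whose reduction modulo $2$ is $2\varphi_1$ is $2\chi_5$. Since $\chi_5$ is absolutely irreducible with integer character values, $W_{\chi_5}$ is a well-defined $W[S_4]$-lattice with $\mathrm{End}_{W[S_4]}(W_{\chi_5})=W$. The plan is to show that every $W[S_4]$-lattice with $F$-character $2\chi_5$ is isomorphic to $W_{\chi_5}\oplus W_{\chi_5}$, by the elementary divisor theorem over the DVR $W$ applied to the $M_2(W)$-action on the multiplicity space. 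The reduction modulo $2$ of $W_{\chi_5}\oplus W_{\chi_5}$ is $T_1\oplus T_1$, which is decomposable and hence not isomorphic to the indecomposable $Z$. This shows $Z$ has no $W$-lift, completing the proof by Lemma~\ref{lem:defhelp}(i). The main obstacle is likely the uniqueness (up to $W[S_4]$-isomorphism) of the lattice with character $2\chi_5$: although standard in lattice theory over a DVR, it requires care to justify cleanly.
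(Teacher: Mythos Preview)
Your approach to the first assertion is essentially the same as the paper's: both exhibit explicit non-identity stable endomorphisms for string modules away from the boundary of the $3$-tube.

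For the second assertion, your route differs from the paper's. The paper reduces to $V = T_{11}$ (your $Z$) via Lemma~\ref{lem:defhelp}, then identifies $T_{11}$ as $\mathrm{Ind}_{A_4}^{S_4} E_{12}$ for a suitable uniserial $kA_4$-module $E_{12}$, and invokes Proposition~\ref{prop:induceddef} together with the known result $R(A_4,E_{12})\cong k$ from \cite{bl}. Your proposed argument via $\mathrm{Ext}^1=0$ and non-liftability to $W$ is more self-contained in spirit, but it has a genuine gap.

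The gap is the implication ``no lift over $W$ forces $R(S_4,Z)\cong k$.'' From $\mathrm{Ext}^1_{kS_4}(Z,Z)=0$ you correctly deduce that $R(S_4,Z)$ is a quotient of $W$, hence $R(S_4,Z)\cong W/2^n$ for some $n\in\{1,2,\ldots\}\cup\{\infty\}$. Your lattice argument (which is in fact correct: $\mathrm{Ext}^1_{WS_4}(W_{\chi_5},W_{\chi_5})=0$ because it is a finitely generated torsion $W$-module on which multiplication by $2$ is injective) rules out $n=\infty$. But nothing you have written excludes $n\ge 2$: a universal deformation ring $W/4$ or $W/8$ is perfectly consistent with both $\mathrm{Ext}^1=0$ and the absence of a $W$-lift. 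To complete your approach you would have to show that $Z$ does not even lift over $W/4$, which requires a separate obstruction calculation --- for instance, computing that the obstruction class in $\mathrm{Ext}^2_{kS_4}(Z,Z)\cong k$ for the small extension $W/4\twoheadrightarrow k$ is nonzero. The obstacle you flagged (uniqueness of the lattice with character $2\chi_5$) is not the real difficulty here.
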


\begin{proof}
Let $U$ be a $\Lambda$-module belonging to $\mathfrak{C}$ but not to its boundary. 
Using  hooks and cohooks (see Definition \ref{def:arcomps}) it follows that there exist integers 
$n\ge 1$ and
$i\in\{0,1,2\}$ such that $\Omega^i(U)$ is isomorphic to one of the following string modules: 
$$\begin{array}{c@{\;=\;}l}
X_{-1,n} & M\left( \left(\gamma^{-1}\alpha^{-1}\gamma\eta^{-1}\beta\alpha^{-1}\beta^{-1}\eta\right)^{n-1}
\gamma^{-1}\alpha^{-1}\gamma\eta^{-1}\right),\\[2ex]
X_{0,n} & M\left( \left(\gamma^{-1}\alpha^{-1}\gamma\eta^{-1}\beta\alpha^{-1}\beta^{-1}\eta\right)^{n-1}
\gamma^{-1}\alpha^{-1}\gamma\eta^{-1}\beta\alpha^{-1}\beta^{-1} \right),
\\[2ex]
X_{1,n} & M\left( \left(\gamma^{-1}\alpha^{-1}\gamma\eta^{-1}\beta\alpha^{-1}\beta^{-1}\eta\right)^n
\gamma^{-1}\alpha^{-1}\right).
\end{array}$$
Let $\{x_r^{\ell,n}\}_{r=0}^{8n-1+3\ell}$ be the canonical $k$-basis for $X_{\ell,n}$, $\ell\in\{-1,0,1\}$.
Then the $\Lambda$-module endomorphism of $X_{\ell,n}$ which sends $x_0^{\ell,n}$ to $x_4^{\ell,n}$,
and all other $x_r^{\ell,n}$ to zero does not factor through
a projective $\Lambda$-module. Hence the stable endomorphism ring of $X_{\ell,n}$ has
$k$-dimension at least $2$.

By Lemma \ref{lem:defhelp}, to prove $R(S_4,V)\cong k$, for $V$ as in the statement of the lemma, it is
sufficient to consider the case when $V=T_{11}$ is the uniserial $kS_4$-module with
radical series $T_1,T_1$. Let $A_4$ be the alternating group of order $12$ inside $S_4$.
There are three one-dimensional simple $kA_4$-modules $E_0$, $E_1$ and $E_2$ where
$E_0$ is the trivial simple $kA_4$-module. Consider the uniserial $kA_4$-module $E_{12}$ with
descending radical series $E_1,E_2$. Then $E_{12}$ lies at the boundary of a $3$-tube of the stable
Auslander-Reiten quiver of $kA_4$, and there is a non-split short exact sequence of $kA_4$-modules
$$\xymatrix{
0\ar[r] & E_2\ar[r] & E_{12} \ar[r] & E_1\ar[r] & 0.}$$
By \cite[Lemma 8.6(5)]{alp},  since $\mathrm{Ind}_{A_4}^{S_4}E_1\cong T_1\cong
\mathrm{Ind}_{A_4}^{S_4}E_2$, there is a non-split short exact 
sequence of $kS_4$-modules
$$\xymatrix{
0\ar[r] & T_1\ar[r] & \mathrm{Ind}_{A_4}^{S_4}E_{12} \ar[r] & T_1\ar[r] & 0}$$
which implies that $T_{11}\cong \mathrm{Ind}_{A_4}^{S_4}E_{12}$. By \cite[Prop. 3.4]{bl},
$\underline{\mathrm{End}}_{kA_4}(E_{12})=k$ and $R(A_4,E_{12})\cong k$. Since $\underline{
\mathrm{End}}_{kS_4}(T_{11})=k$ and
$$\mathrm{Ext}^1_{kS_4}(T_{11},T_{11})=0=\mathrm{Ext}^1_{kA_4}
(E_{12},E_{12}),$$
Proposition \ref{prop:induceddef}  implies that $R(S_4,T_{11})\cong R(A_4,E_{12})\cong k$.
\end{proof}


\section{Stable endomorphism rings}
\label{s:stableend}
\setcounter{equation}{0}
\setcounter{figure}{0}

In this section, we complete the proof of Theorem \ref{thm:supermain}, by determining which 
components of the stable Auslander-Reiten quiver of $kS_4$ contain modules with stable 
endomorphism ring $k$.
Recall that $kS_4$ is Morita equivalent to the special biserial algebra $\Lambda=kQ/I$ where 
$k$, $Q$ and $I$ are as in $(\ref{eq:qi})$.


We first consider all components of the stable Auslander-Reiten quiver of $\Lambda$ of type 
$\mathbb{Z}A_\infty^\infty$
and determine which of these components contain a module with stable endomorphism ring $k$.

\begin{prop}
\label{prop:endos}
Let $\Lambda=kQ/I$ where $Q$ and $I$ are as in $\S\ref{s:ks4}$. Then the components of the stable 
Auslander-Reiten quiver of $\Lambda$ of type $\mathbb{Z}A_\infty^\infty$ containing a module with 
stable endomorphism ring $k$ are precisely the components containing $S_0$, $\Omega(S_0)$ or 
$S_1$.
\end{prop}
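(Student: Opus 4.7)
The plan is to combine the string-and-band classification with the description of irreducible maps and stable homomorphisms recalled in \S\ref{s:ks4}. Since $\Lambda$ is special biserial, every non-projective indecomposable $\Lambda$-module is either a string module or a band module, and the band modules all lie in $1$-tubes of the stable Auslander-Reiten quiver. Hence any module in a component of type $\mathbb{Z}A_\infty^\infty$ is a string module $M(S)$. By Lemma \ref{lem:defhelp}(i), $\Omega$ preserves stable endomorphism rings, and by Lemma \ref{lem:upsidedown} so does the involution $\nu$. It therefore suffices to classify, up to $\Omega$ and $\nu$, the strings $S$ of minimal length in components of type $\mathbb{Z}A_\infty^\infty$, and to decide in each case whether $\underline{\mathrm{End}}_\Lambda(M(S))=k$.

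The components containing $S_0$, $\Omega(S_0)$, and $S_1$ are represented by $M(1_0)$, $\Omega(M(1_0))$, and $M(1_1)$, and these have already been shown (in Lemma \ref{lem:c0endos} and by inspection of $S_1$) to contain modules with stable endomorphism ring $k$. What remains is to show that no other $\mathbb{Z}A_\infty^\infty$-component can. For this I would take an arbitrary string $S$ realizing the minimum length in such a component and, using the avoidance constraints coming from $J$ together with the hook/cohook procedure of Definition \ref{def:arcomps} and Figure \ref{fig:arbcomp}, argue that $S$ must be built out of the two cyclic patterns $\alpha\beta^{-1}\gamma^{-1}$ and $\gamma\eta^{-1}\beta\alpha^{-1}\beta^{-1}\eta\gamma^{-1}\alpha^{-1}$ that appear around the boundaries of $P_0$ and $P_1$. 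The minimal shapes of these patterns already account for the three components identified in the statement, and every remaining minimal $S$ either repeats a period or contains a proper internal segment that coincides with an end segment on the opposite side.

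In each such remaining case I would exhibit an explicit non-scalar element of $\underline{\mathrm{End}}_\Lambda(M(S))$ by invoking Remark \ref{rem:stringhoms}: a nontrivial self-overlap in $S$ produces a substring $C$ whose boundary letters on both occurrences satisfy conditions (i) and (ii) of that remark, and the associated map $\sigma_C:M(S)\to M(S)$ sends the canonical basis vector $x_0$ to some $x_r$ with $r>0$ and thus is linearly independent from the identity. To finish one must check that $\sigma_C$ does not factor through a projective $\Lambda$-module; this is exactly the type of argument used in the proofs of Lemmas \ref{lem:c1endos} and \ref{lem:3tubeudr}, where factorisations through projectives are ruled out because the relevant substring would have to lie inside one of the maximal directed strings in $\mathcal{M}$, contradicting the shape of $S$. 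The main obstacle is the combinatorial enumeration: verifying that the periodic families identified in \S\ref{s:c0} and \S\ref{s:c1} really exhaust, up to $\Omega$ and $\nu$, all strings of minimal length in $\mathbb{Z}A_\infty^\infty$-components; once this enumeration is in hand, the production of a non-scalar stable endomorphism in every remaining case is a routine application of Remark \ref{rem:stringhoms}.
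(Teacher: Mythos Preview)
Your plan has a genuine gap: you only propose to produce a non-scalar stable endomorphism for the \emph{minimal-length} string module $M(S)$ in each component. But that is not enough. Since $\Lambda$ is symmetric, $\tau=\Omega^2$, so the $\Omega$-orbit of $M(S)$ covers only one $\tau$-orbit inside $\mathfrak{C}$ (together with its partner in $\Omega(\mathfrak{C})$). The other $\tau$-orbits in $\mathfrak{C}$ are represented by the iterated hook and cohook modules $M(S_{h\cdots h})$ and $M(S_{c\cdots c})$ of Figure~\ref{fig:arbcomp}, and these are \emph{not} reached from $M(S)$ by applying $\Omega$ or $\nu$. So showing $\underline{\mathrm{End}}_\Lambda(M(S))\neq k$ says nothing about them. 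Indeed, in the component $\mathfrak{C}_1$ containing $S_1$ the minimal string module has stable endomorphism ring $k$ while the modules $C_{\ell,n}$ in the other $\tau$-orbits do not (Lemma~\ref{lem:c1endos}); there is no general principle forcing the reverse situation to be impossible. The paper's proof addresses this by explicitly constructing, for each starting pattern of $S$, a non-factoring endomorphism not only of $M(S)$ but also of every $M(S_{h\cdots h})$ and $M(S_{c\cdots c})$.

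A secondary issue is your structural claim that the minimal $S$ ``must be built out of the two cyclic patterns $\alpha\beta^{-1}\gamma^{-1}$ and $\gamma\eta^{-1}\beta\alpha^{-1}\beta^{-1}\eta\gamma^{-1}\alpha^{-1}$.'' A minimal-length string in a $\mathbb{Z}A_\infty^\infty$-component need not be periodic; the only constraint is that $S$ neither starts nor ends on a peak or in a deep. The paper uses precisely this constraint (together with $\nu$) to reduce to the three starting patterns $S=\alpha^{-1}\gamma\cdots$, $S=\gamma^{-1}\alpha\cdots$, $S=\beta^{-1}\eta\gamma^{-1}\cdots$, and then argues case-by-case. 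Your periodicity heuristic would need substantial justification before it could replace that enumeration.
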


\begin{proof}
Let $\mathfrak{C}$ be a component of type $\mathbb{Z}A_\infty^\infty$ of the stable Auslander-Reiten quiver of 
$\Lambda$ such that $\mathfrak{C}\cup\Omega(\mathfrak{C})$ contains no simple $\Lambda$-module.

Let $M(S)$ be a  string module of minimal length in $\mathfrak{C}$. In particular, $S$ 
cannot start or end in a peak (resp. in a  deep), and near $M(S)$ the component
$\mathfrak{C}$ looks as in Figure \ref{fig:arbcomp}. Hence it suffices to show that each of the 
$\Lambda$-modules
$M(S)$, $M(S_{h\cdots h})$ and $M(S_{c\cdots c})$ has an endomorphism which does not factor through a projective $\Lambda$-module
and which is not a scalar multiple of the identity homomorphism.

Using Lemma \ref{lem:upsidedown} and that $S$ cannot start or end in a peak (resp. in a  deep), 
it follows  that one only needs to consider $M(S)$ where 
\begin{equation}
\label{eq:oioioi}
S=\alpha^{-1}\gamma\cdots,\quad S=\gamma^{-1}\alpha\cdots,\quad \mbox{ or }\quad
S=\beta^{-1}\eta\gamma^{-1}\cdots.
\end{equation}
Note that $M(\alpha^{-1})$ lies in the same stable Auslander-Reiten component as $S_1$,
and hence has already been considered in Lemma \ref{lem:c1endos}.

One now goes through all the cases for $S$ in $(\ref{eq:oioioi})$ and determines suitable 
endomorphisms that do not factor through a projective $\Lambda$-module.
We provide a few examples.
Let $\{z_r\}_{r=0}^{\ell(S)}$ be the canonical $k$-basis of the string module $M(S)$ relative to the 
chosen representative $S$.  

If $S=\alpha^{-1}\gamma\cdots$ then the endomorphism sending $z_0$ to $z_1$ and all
other $z_r$ to zero does not factor through a projective $\Lambda$-module, and the same is true for
$M(S_{h\cdots h})$ and $M(S_{c\cdots c})$. 

If $S=\gamma^{-1}\alpha\cdots$, one needs to consider two possibilities, namely 
$S=\gamma^{-1}\alpha\beta^{-1}\cdots$ and $S=\gamma^{-1}\alpha\gamma\cdots$.
If $S=\beta^{-1}\eta\gamma^{-1}\cdots$, one needs to consider three possibilities, namely
$S=\beta^{-1}\eta\gamma^{-1}$, $S=\beta^{-1}\eta\gamma^{-1}\alpha\cdots$ and
$S=\beta^{-1}\eta\gamma^{-1}\alpha^{-1}\cdots$. For example, if 
$S=\beta^{-1}\eta\gamma^{-1}\alpha\cdots$ then 
the endomorphism sending $z_0$ to $z_3$ and all other $z_r$ to zero does not factor 
through a projective $\Lambda$-module, and the same is true for $M(S_{h\cdots h})$ and 
$M(S_{c\cdots c})$.
\end{proof}

Next, we consider all components of the stable Auslander-Reiten quiver which are one-tubes. 
Except for one $1$-tube consisting of string modules, all the other $1$-tubes consist of band modules.
It follows from \cite{krau} that if $B$ is a band, $\lambda\in k-\{0\}$ and $n\ge 2$ is an  
integer, then $\underline{\mathrm{End}}_\Lambda(M(B,\lambda,n))$ has $k$-dimension
at least $2$. Hence we can concentrate on the band modules $M(B,\lambda,1)$.
We need the following two definitions. 

\begin{dfn}
\label{def:bandendohelp}
Let  $B$ be a band for $\Lambda$, 
$\lambda\in k-\{0\}$ and let $M_{B,\lambda}=M(B,\lambda,1)$. 
Suppose $S$ is a string such that 
\begin{enumerate}
\item[i.] $B\sim_rST_1$ with $T_1=\xi_1^{-1} T'_1\xi_2$, where $T_1,T'_1$ are strings 
and $\xi_1,\xi_2$ are arrows in $Q$; and 
\item[ii.] $B\sim_r ST_2$ with $T_2=\zeta_1 T'_2\zeta_2^{-1}$, where $T_2,T'_2$ are 
strings and $\zeta_1,\zeta_2$ are arrows in $Q$.
\end{enumerate} 
Then by \cite{krau} there exists a non-zero endomorphism of $M_{B,\lambda}$ which factors 
through $M(S)$. We will call such an endomorphism to be of string type $S$. Note 
that there may be several choices of $T_1$ (resp. $T_2$) in (i) (resp. (ii)). In 
other words, there may be several $k$-linearly independent endomorphisms of string type $S$. By 
\cite{krau}, every endomorphism of $M_{B,\lambda}$ is a $k$-linear combination 
of the identity homomorphism and of endomorphisms of string type $S$ for suitable 
choices of strings $S$ satisfying (i) and (ii). 
\end{dfn}

\begin{dfn}
\label{def:piece}
Let  $B$ be a band for $\Lambda$. 
A word $S$ is called a  top-socle piece of $B$ if 
\begin{enumerate}
\item[a.] $S\in\{\alpha^{-1},\beta^{-1},\gamma^{-1},\eta^{-1},\alpha^{-1}\beta^{-1},
\beta^{-1}\gamma^{-1},\gamma^{-1}\alpha^{-1}\}$, and 
\item[b.] $B\sim_r ST$ for some string $T$ where $T=\xi T' \zeta$ and $\xi,\zeta$ are 
arrows in $Q$. 
\end{enumerate}
Note that $B\sim_r C_0C_1^{-1}C_2C_3^{-1}\cdots C_s^{-1}$ where $s\ge 1$ is odd and 
$C_0,C_1,\ldots,C_s$ are top-socle pieces of $B$. 
\end{dfn}

In the proof of the following proposition, we will often use the equality sign instead of the 
more precise $\sim_r$.

\begin{prop}
\label{prop:bands}
Let $\Lambda=kQ/I$ where $Q$ and $I$ are as in $\S\ref{s:ks4}$. Then each module $M$ which lies 
in a component of the stable Auslander-Reiten quiver of $\Lambda$ which is a $1$-tube satisfies 
$\mathrm{dim}_k\,\underline{\mathrm{End}}_\Lambda(M)\ge 2$.
\end{prop}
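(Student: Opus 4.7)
The proposition splits naturally into two cases: the $1$-tubes consisting of band modules, and the unique $1$-tube consisting of string modules. The remark immediately preceding the proposition, which invokes \cite{krau}, already handles $M(B,\lambda,n)$ for $n\ge 2$, so the core task is to treat each $M_{B,\lambda}=M(B,\lambda,1)$ and each string module in the exceptional $1$-tube.

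For a band module $M_{B,\lambda}$ I plan to use the top-socle decomposition $B\sim_r C_0C_1^{-1}C_2C_3^{-1}\cdots C_s^{-1}$ of Definition \ref{def:piece}, where $s\ge 1$ is odd and each $C_i$ is drawn from the seven admissible top-socle pieces $\{\alpha^{-1},\beta^{-1},\gamma^{-1},\eta^{-1},\alpha^{-1}\beta^{-1},\beta^{-1}\gamma^{-1},\gamma^{-1}\alpha^{-1}\}$. Combining the $\nu$-symmetry of Lemma \ref{lem:upsidedown} with the rotational freedom in $\sim_r$, one reduces to a short list of canonical shapes for the pair $(C_0,C_1)$. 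In each reduced case I would choose a short string $S$, typically $S=C_0$ or $S$ equal to $C_0$ extended by the first arrow of $C_1^{-1}$, and verify the two inclusion conditions of Definition \ref{def:bandendohelp}; this yields a nonzero endomorphism of $M_{B,\lambda}$ of string type $S$. Because $S$ is strictly shorter than each maximal directed string in $\mathcal{M}$, the module $M(S)$ is non-projective, so this endomorphism does not factor through any projective $\Lambda$-module; inspection of its action on the canonical $k$-basis of $M_{B,\lambda}$ then shows it is not a scalar multiple of the identity, giving $\dim_k\underline{\mathrm{End}}_\Lambda(M_{B,\lambda})\ge 2$.

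For the unique $1$-tube of string modules I would reuse the hook/cohook method from the proof of Proposition \ref{prop:endos}: select a string module $M(S)$ of minimal length in the tube, observe that $S$ can neither start nor end on a peak or in a deep, and exhibit a basis-shift self-map of $M(S)$ which does not factor through a projective. The construction then propagates along the tube, because every other module is obtained from $M(S)$ by iterated addition of hooks and cohooks on either side, and the shift endomorphism remains both non-scalar and non-factoring when pulled back or pushed forward along the canonical injections and projections.

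The main obstacle will be the combinatorial bookkeeping in the band case when $s=1$, since then the band is so short that one must check carefully that the chosen string-type endomorphism is $k$-linearly independent from $\mathrm{id}_{M_{B,\lambda}}$, and that no coincidence forced by the scalar $\lambda\in k^*$ collapses the two. This is precisely where the alternating structure of the top-socle decomposition is essential: it guarantees that the endomorphism of string type $S$ shifts basis vectors within a single period of the band rather than producing a global scalar multiple of the identity.
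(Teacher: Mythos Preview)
Your plan has two concrete gaps.

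First, in the band case your key inference is a non-sequitur. You write that ``$M(S)$ is non-projective, so this endomorphism does not factor through any projective $\Lambda$-module.'' But the fact that an endomorphism factors through a non-projective string module $M(S)$ does \emph{not} preclude its factoring through a projective as well: the composite $M_{B,\lambda}\twoheadrightarrow M(S)\hookrightarrow M_{B,\lambda}$ may still lift through the projective cover of $M_{B,\lambda}$. (Your length criterion is also off: the top-socle pieces of length two, e.g.\ $\alpha^{-1}\beta^{-1}$, are not strictly shorter than $\eta\in\mathcal{M}$, and extending $C_0$ by an arrow of $C_1^{-1}$ can produce a string of length three.) In the paper's proof this is exactly why a uniform choice like ``$S=C_0$'' is avoided: instead one observes that every band contains $\beta^{-1}\gamma^{-1}$ or $\eta^{-1}$ as a top-socle piece and then, in a case analysis on the neighbouring pieces, exhibits specific endomorphisms (e.g.\ of string type $\gamma$, $\gamma^{-1}\alpha$, $\beta^{-1}\eta\gamma^{-1}$, or $\alpha\beta^{-1}\eta\gamma^{-1}\alpha$) and checks directly that they do not factor through a projective. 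Note also that Lemma~\ref{lem:upsidedown} is stated only for string modules, so invoking the $\nu$-symmetry for band modules requires a separate argument.

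Second, your treatment of the string $1$-tube imports the $\mathbb{Z}A_\infty^\infty$ picture from Proposition~\ref{prop:endos}, and that picture is false here. The modules in this tube are $M\bigl(\beta^{-1}\gamma^{-1}(\alpha\beta^{-1}\gamma^{-1})^n\bigr)$ for $n\ge 0$; the minimal one, $M(\beta^{-1}\gamma^{-1})$, \emph{does} end on a peak (take $D=\gamma\beta\in\mathcal{M}$) and \emph{does} start in a deep (take $C=\gamma\beta$), contrary to your hypothesis. This is precisely why the component is a tube rather than of type $\mathbb{Z}A_\infty^\infty$. The paper instead writes down the modules explicitly and, for each $n$, observes that the endomorphism sending $z_0\mapsto z_{3n+2}$ and all other $z_r\mapsto 0$ is nonzero in the stable category. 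You should do the same rather than trying to propagate a single construction along hooks and cohooks.
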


\begin{proof}
Consider first the $1$-tube consisting of string modules. If $X$ is a string module in
this $1$-tube, then $X=M(\beta^{-1}\gamma^{-1}(\alpha\beta^{-1}\gamma^{-1})^n)$ for some $n\ge 0$.
Let $\{z_r\}_{r=0}^{3n+2}$ be the canonical $k$-basis of the string module $X$.
Then  the endomorphism sending $z_0$ to $z_{3n+2}$ and all other
$z_r$ to zero does not factor through a projective $\Lambda$-module.

All other $1$-tubes consist of band modules, and it follows from
\cite{krau} that the only possible band modules with stable endomorphism ring $k$ are those
lying at the boundary. Hence one only needs to consider the band modules
$M_{B,\lambda}=M(B,\lambda,1)$ where $B$ is a band for $\Lambda$ and $\lambda\in k-\{0\}$.
Suppose there is a band $B$  such that $M_{B,\lambda}$ has stable endomorphism ring
equal to $k$.

Since every band contains either $\beta^{-1}\gamma^{-1}$ or $\eta^{-1}$ as top-socle piece,
one successively eliminates these two cases by determining endomorphisms of suitable string types
which do not factor through a projective $\Lambda$-module. We provide a few examples in each case.

Suppose first that $B$ contains $\beta^{-1}\gamma^{-1}$ as top-socle piece, i.e. 
$B=\beta^{-1}\gamma^{-1}C$ for some string $C=C_1^{-1}C_2C_3^{-1}\cdots C_s^{-1}$ where 
$s\ge 1$ is odd and $C_1,C_2,\ldots,C_s$ are top-socle pieces of $B$.
One needs to distinguish between $s=1$ and $s\ge 3$. 
If  $s\ge 3$ then, without loss of generality, $C_{1}^{-1}C_2\neq \alpha\beta^{-1}
\gamma^{-1}$. This leads to two cases, namely 
either $C_{1}^{-1}C_2 = \alpha\gamma\eta^{-1}$ or $C_{1}^{-1}C_2=
\alpha\beta^{-1}$. For example, if $C_{1}^{-1}C_2 = \alpha\gamma\eta^{-1}$ then $M_{B,\lambda}$
has an endomorphism of string type $\gamma$ or of string type $\gamma^{-1}\alpha$ which
does not factor through a projective $\Lambda$-module. 

Suppose now that $B$ contains $\eta^{-1}$ as top-socle piece but not $\beta^{-1}\gamma^{-1}$. 
One needs to distinguish between the case when $B=\eta^{-1}\beta\alpha^{-1}\gamma$ and 
the case when $B$ also contains $\alpha^{-1}\beta^{-1}$ as top-socle piece.  
If $B$ contains both $\eta^{-1}$ and $\alpha^{-1}\beta^{-1}$ as top-socle pieces then 
$B=\eta^{-1}\beta\alpha^{-1}\beta^{-1}\eta D$ for some string $D=D_1D_2^{-1}D_3\cdots D_u^{-1}$ 
where $u\ge 2$ is even and $D_1,D_2,\ldots,D_u$ are top-socle pieces of $B$. 
This leads to two subcases $D_1=\gamma^{-1}\alpha^{-1}$  or  $D_1=\gamma^{-1}$.
Each of these subcases leads to further subcases that have to be distinguished. For example,
if $D_1=\gamma^{-1}$, then either $D_2^{-1}=\alpha$ or $D_2^{-1}=\alpha\gamma$.
Concentrating on the case when $D_1D_2^{-1}=\gamma^{-1}\alpha$, one has $u\ge 4$ and, by our assumptions, $D_3D_4^{-1}=\beta^{-1}\eta$. Then $M_{B,\lambda}$
has an endomorphism of string type $\beta^{-1}\eta\gamma^{-1}$ or of string type
$\alpha\beta^{-1}\eta\gamma^{-1}\alpha$  which
does not factor through a projective $\Lambda$-module. 

Altogether, it follows that $B$ contains neither $\beta^{-1}\gamma^{-1}$ nor $\eta^{-1}$ as top-socle 
piece, which means that $B$ does not exist and hence Proposition \ref{prop:bands} follows.
\end{proof}



\begin{thebibliography}{88}

\bibitem{alp} J.~L.~Alperin, Local representation theory. Modular representations as an introduction to 
the local representation theory of finite groups. Cambridge Studies in Advanced Mathematics, vol. 11, 
Cambridge University Press, Cambridge, 1986.

\bibitem{AC} M.~Auslander and J.~F.~Carlson, Almost-split sequences and group rings. J. Algebra
103 (1986), 122--140.

\bibitem{bl} F.~M.~Bleher, Universal deformation rings and Klein four defect groups. Trans. Amer. Math. Soc. 354 (2002), 3893--3906.
	
\bibitem{3sim} F.~M.~Bleher, Universal deformation rings and dihedral defect groups. 
In press, Trans. Amer. Math. Soc..  

\bibitem{diloc} F.~M.~Bleher, Universal deformation rings for dihedral $2$-groups. 
In press, J. London Math. Soc..

\bibitem{bc} F.~M.~Bleher and T.~Chinburg, Universal deformation rings and cyclic blocks. Math. Ann. 318 (2000), 805--836.

\bibitem{bc4.9} F.~M.~Bleher and T.~Chinburg, Universal deformation rings need not be
complete intersections. C. R. Math. Acad. Sci. Paris 342 (2006),  229--232.

\bibitem{bc5} F.~M.~Bleher and T.~Chinburg, Universal deformation rings need not be complete 
intersections. Math. Ann. 337 (2007), 739--767. 

\bibitem{breuil} C.~Breuil, B.~Conrad, F.~Diamond and R.~Taylor, On the modularity of 
elliptic curves over $\mathbb{Q}$: 
Wild $3$-adic exercises. J. Amer. Math. Soc. 14 (2001), 843--939.
    
\bibitem{buri} M.~C.~R.~Butler and C.~M.~Ringel, Auslander-Reiten sequences with few middle terms and applications to string algebras. Comm. Algebra 15 (1987), 145--179. 

\bibitem{carl2} J.~F.~Carlson and J.~Th\'{e}venaz, The classification of endo-trivial modules.  
Invent. Math.  158  (2004),  389--411.

\bibitem{cornell} G.~Cornell, J.~H.~Silverman and G.~Stevens (eds.), Modular Forms and Fermat's Last Theorem (Boston, 1995). Springer-Verlag, Berlin-Heidelberg-New York, 1997.

\bibitem{lendesmit} B.~de Smit and H.~W.~Lenstra, Explicit construction of universal deformation rings. In: Modular Forms and Fermat's Last Theorem (Boston, MA, 1995), Springer-Verlag, Berlin-Heidelberg-New York, 1997, pp. 313--326.

\bibitem{erd} K.~Erdmann, Blocks of Tame Representation Type and Related Algebras, Lecture Notes in Mathematics, vol. 1428, Springer-Verlag, Berlin-Heidelberg-New York, 1990.

\bibitem{krau} H.~Krause, Maps between tree and band modules. J. Algebra 137 (1991), 186--194.

\bibitem{linckel} M.~Linckelmann, A derived equivalence for blocks with dihedral defect groups. J. 
Algebra 164 (1994), 244--255.

\bibitem{linckel1} M.~Linckelmann, The source algebras of blocks with a Klein four defect group, J.
Algebra 167 (1994), 821--854.
 
\bibitem{llosent} G.~Llosent. Stable endomorphism rings and Ext groups for the
	symmetric group $S_4$. Dissertation, University of Iowa, 2007.
       
\bibitem{maz1} B.~Mazur, Deforming Galois representations. In: Galois groups over $\mathbb{Q}$ (Berkeley, CA, 1987), Springer-Verlag, Berlin-Heidelberg-New York, 1989, pp. 385--437.
        
\bibitem{taywiles} R.~Taylor and A.~Wiles, Ring-theoretic properties of certain Hecke algebras. Ann. of Math. 141 (1995), 553--572.

\bibitem{wiles} A.~Wiles, Modular elliptic curves and Fermat's last theorem.  Ann. of Math. 141 (1995), 443--551.

\end{thebibliography}
\end{document}